\title{{\bf On the Nearest Neighbor Algorithm for Mean Field Traveling Salesman Problem}}
\author{ {\bf Antar Bandyopadhyay}\footnote{E-Mail: antar@isid.ac.in} \\  {\bf Farkhondeh Sajadi}\footnote{E-Mail: farkhondeh.sajadi@gmail.com}\\  \\
\small Theoretical Statistics and Mathematics Unit,\\
\small Indian Statistical Institute, Delhi Centre,\\
\small 7 S. J. S. Sansanwal Marg \\ New Delhi 110016 \\ INDIA}
\date{}
\DeclareOldFontCommand{\rm}{\rmfamily}{\mathrm}
\newtheorem{theorem}{Theorem}[section]
\newtheorem{lemma}{Lemma}[section]
\newtheorem{proposition}{Proposition}[section]
\newtheorem{corollary}{Corollary}[section]
\theoremstyle{definition}
\theoremstyle{remark}
\numberwithin{equation}{section}
\newcommand{\TNN}{T^{NN}_{n}}
\newcommand{\bE}{\mathbb{E}}
\newcommand{\bP}{{\mathbf P}}
\newcommand{\Lt}{\mathcal{L}_{2}}
\newcommand{\Pro}{\mathbb{P}}
\newcommand{\Var}{\mathsf{Var}}
\newcommand{\bone}{\mathbf{1}}
\begin{document}

\maketitle

\begin{abstract}
In this work we consider the mean field  traveling salesman problem, where the intercity distances are taken to be i.i.d. with some distribution $F$.  
This paper focus on the \emph{nearest neighbor tour}  which is to move to the nearest non-visited city and we show that under some conditions on $F$, 
which are satisfied by exponential distribution with constant mean, 
the total length of the nearest neighbor tour, asymptotically almost surely scales as $\log n$. Similar result is known for Euclidean TSP and nearest 
neighbor tour. We further derive the limiting behavior of the total length of the nearest neighbor tour for more general distribution function $F$ 
and show that its asymptotic properties are determined by the scaling properties of the density of $F$ at $0$.
\end{abstract}

\vspace{0.25in}
\noindent
{\bf Keywords:} \emph{Nearest neighbor algorithm; mean field set up; traveling sales man problem.} 

\vspace{0.25in}
\noindent
{\bf 2010 AMS Subject Classification:} \emph{Primary: 60K37; Secondary: 05C85, 68Q87, 68W25}

\section{Introduction}
\label{Intro}
The traveling salesman problem (TSP) is a very well known combinatorial optimization problem. The aim is to find the shortest tour, 
connecting a number of cities visited by a traveling salesman on his sales route, such that he visits each city exactly once and finally returns 
to the starting city. Formally, we are given a set $\left\{c_{1}, c_{2}, \ldots, c_{n}\right\}$ of \emph{cities} and for each pair 
$\left\{c_{i}, c_{j}\right\}$ of distinct cities, a distance $d(c_{i}, c_{j})$. The goal is to find a permutation $\pi$ of the cities that minimizes 
the quantity
\begin{equation}\sum_{i=1}^{n}d(c_{\pi(i)}, c_{\pi(i+1)}) \label{TSP} \end{equation}
where $\pi(n+1)=1$. This quantity is called the \emph{tour length}, since it is the total distance traveled by the salesman. We shall 
concentrate in this chapter on the \emph{symmetric} TSP, in which the distances satisfy 
\[
d(c_{i}, c_{j}) = d(c_{j}, c_{i}) \quad \text{for} \quad 1 \leq i, j \leq n.
\]

There are several randomized versions of this problem where the distances are taken to be random. In particular the one 
which attracted considerable attention among mathematicians and computer scientists is known as the \textit{Euclidean TSP}, 
in which the $n$ cities are randomly distributed in a $d$-dimensional hypercube and the distances between cities are given by the Euclidean metric and
are thus random.
The other random TSP, which has been of interest within the statistical physics community is \textit{the mean field TSP}. Here the distances 
between pairs of cities, i.e., $d(c_{i}, c_{j})$ are taken as independent random variables with a given distribution $F$. 
Note that in this case, the geometric structure may break since the triangle inequality may not necessarily hold with probability one. In fact we 
cannot quite say that the numbers $d(c_{i}, c_{j})$ really represent distances under any metric. Although this seems artificial, however 
such models are of interest in statistical physics literature.  

It is well known in algorithm literature \cite{PaSt98} that TSP in general is a \emph{NP-Complete} problem. So there are several approximate algorithms
which tries to approximate the optimal tour with polynomial running time. Among them, one of the simplest is 
the \emph{Nearest Neighbor (NN) Algorithm} \cite{BeNe68}, which is also known as \emph{the next best method} \cite{Ga65}.
It was one of the first algorithms used to determine an approximate solution to the traveling salesman problem. 
The algorithm starts with a tour containing a randomly chosen city and then always adds the nearest not yet
visited city to the last city in the tour. The algorithm terminates when every city has been added to the tour.
In the NN algorithm, a tour is constructed as follows:

\begin{flushleft}

\begin{verse} 

\begin{itemize}
{\tt
\item[Step-0:] Input graph $G$ with a linear ordering of its vertices say 
              \[V := \left\{c_{1}, c_{2}, \ldots, c_{n}\right\}.\] Let
              $Tour \leftarrow \left\{c_{1}\right\}$ and $c_{\pi(1)}=c_1$.\\
              
\item[Step-1: ] Write $Tour \leftarrow \left\{c_{\pi(1)}, c_{\pi(2)}, \ldots, c_{\pi(i)}\right\}$. Choose $c_{\pi(i+1)}$ to be the city $c_j$ that minimizes \[\lbrace d(c_{\pi(i)} ,c_j) : j \neq \pi(k) , 1\leq k \leq i\rbrace .\]
 Update $Tour$ as
  \[   Tour \leftarrow Tour \cup \left\{c_{\pi(i+1)}\right\}. \]\\

\item[Step-2: ] Go to Step-1 unless  $V \setminus Tour=\emptyset$.\\

\item[Step-3: ] Stop with output $Tour$ as the NN tour with starting city $c_1$. \\
}
\end{itemize}
\end{verse}
\end{flushleft}

For the convenience, when there are ties in {\tt Step-1}, we assume that they can be 
broken arbitrarily. The NN algorithm can be improved by repeating the algorithm for each possible starting city and then take the minimum solution among them 
\cite{Ga65}. It is known that, for TSP on $n$ cities, the running time for NN  
algorithm is $O(n^2)$ \cite{JoMcGe97, RoStLe77}.

Denote the distance $d(c_i,c_j)$ by $L_{ij}$. Since the NN algorithm is to move to the nearest non-visited city, therefore starting from $c_1$, by using this 
algorithm we need to find the nearest city to it. We call it $v_{2}$. In this way, we need to find 
\[
\displaystyle \min\left\{L_{12},L_{13},\ldots,L_{1n}\right\}
\]
Then from city $v_{2}$ we find the nearest city to that and call it $v_{3}$. Here we need to find 
\[
\displaystyle \min\left\{L_{v_{2}u} |  u \in \left\{2, 3, \ldots, n\right\} \quad \text{and} \quad u\neq v_{2} \right\}.
\]
We continue the algorithm till all $n$ cities have been visited. Then from there we go back to starting city which is $c_1$.

Define $\TNN$ to be the length of NN tour among $n$ cities in the TSP, then
\begin{equation} 
\TNN = \sum_{i=1}^{n} L_{v_{i}v_{i+1}}, \quad  v_{1}=1=v_{n+1}  \label{Tn} \,.
\end{equation} 

\subsection{The deterministic TSP }
The performance of nearest neighbor algorithm has been studied for the TSP when the distances are defined through a metric. 
Let $T^{opt}_n$ be the length of the optimal tour and $\left\lceil x\right\rceil$ denote the smallest integer greater than or equal to $x$. 
\cite{RoStLe77} measured the closeness of a tour by the ratio of the obtained tour length, to the optimal tour length. 
They proved that if the cities are placed in a metric space and the intercity distances are given by the metric then
\[
\frac{\TNN}{T^{opt}_n} \leq \frac{1}{2}\left\lceil \log_2 n\right\rceil+\frac{1}{2} \,.
\]
They also showed that for each $m>3$, there exists a traveling salesman graph with $n=2^m-1$ nodes inside a metric space such that
\[\frac{\TNN}{T^{opt}_n} > \frac{1}{3} \log_2(n+1) +\frac{4}{9} \,.\]
\subsection{The random TSP }
One of the famous mathematical results for the Euclidean TSP is Beardwood-Halton-Hammersley theorem which studies the large sample behavior of the length of 
shortest tour in TSP.
Let the cities be independently and uniformly distributed on $[0,1]^{d}$.
\cite{BeHaHa59} showed that there is a constant $0<\beta_{TSP}(d)<\infty$ such that with probability one
\[
\frac{T^{opt}_{n}}{n^{\frac{d-1}{d}}} \longrightarrow \beta_{TSP}(d)
\]
They also proved that for nonuniform random samples, there is an universal constant $\beta_{TSP}(d)$ such that 
\[
\frac{T^{opt}_{n}}{n^{\frac{d-1}{d}}} \longrightarrow \beta_{TSP}(d) \int_{\mathbb{R}^{d}} f(x)^{(d-1)/d} dx \quad \mbox{a.s.}
\]
where $f(x)$ is the density of the absolutely continuous part of the distribution of cities with a compact support.

Asymptotic results in the mean field TSP have been obtained by \cite{Wast10}. Let  $L_{ij}$'s be independent random variables from a fixed distribution on the 
nonnegative real numbers. Suppose as $t\longrightarrow 0^{+}$
\[
\frac{\Pro(L_{ij}< t) }{t} \longrightarrow 1 
\] 
He proved that for large $n$, 
\begin{equation}
T^{opt}_{n} \stackrel{\Pro}{\longrightarrow} \frac{1}{2} \int_{0}^{\infty} \! h(x) \,dx
\label{Equ:Wastlund}
\end{equation}
where $h$ as a function of $x$ is implicitly defined through the equation
\[
\left(1+\frac{x}{2}\right)e^{-x} + \left(1+\frac{h(x)}{2}\right)e^{-h(x)} = 1
\]
Although there seems to be no simple expression for this limit in terms of known mathematical constants, it can be evaluated numerically to be approximately 
$ 2.041548$.

In this paper we study the limiting behavior of the total length of the tour, obtained by NN algorithm for the mean field TSP. 
Our motivation is similar to that of \cite{RoStLe77}. We would like 
to compare the apparent ``loss'' (that is, more distance to be traversed) accrued by using the NN algorithm with respect to the optimal solution.
But because of \eqref{Equ:Wastlund}, it is enough to consider the limiting behavior of $\TNN$.
We show if $F$, the distribution of the distance between cities, has a density which is continuous at $0$ with $F'\left(0+\right) > 0$, 
then the total length of the NN tour for mean field TSP scales as $\log n$. This parallels the conclusions drawn in \cite{RoStLe77} for
Euclidean TSP.
Moreover we also consider a general distribution function $F$ with non-negative support and show that the asymptotic behaviors for 
$\TNN$ depend on the limiting 
properties of the density near $0$. 

The rest of the paper is structured as follows. In the following section we state our main results whose proofs are given in Section~\ref{Proofs}.
In Section~\ref{Sec:Aux} we present three auxiliary results and their proofs which we need in proving the main results. 
Section~\ref{last edge} contains a study the first and the last edges of NN tour in the mean field TSP and we show that the sum total of the
first and last edge weights remains tight as the number of cities grow to infinity. Finally, 
in Section~\ref{discuss} we discuss about possible relaxation of the assumptions on the distribution $F$.

\section{Main results}
\label{Main results}
We will assume that the mean and the variance of $F$ are finite and $F$ has a density $f$. 
Our first result shows that $\TNN$ is ``close'' to its expected value. 
\begin{theorem}
\label{TNN-E(TNN) converges}
Assume that as $t\longrightarrow 0+,~ \frac{f(t)}{t^{\alpha}}\longrightarrow C$, 
where $C\in (0,\infty)$ is constant and  $-1< \alpha < 1$.  Then as $n\longrightarrow \infty,$ 
\begin{equation}
\lbrace \TNN-\bE[\TNN]\rbrace_{n\geq1} \quad  \text{converges weakly}. 
 \label{TNN-E(TNN)} \end{equation}
\end{theorem}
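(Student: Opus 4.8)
The plan is to exploit the conditional independence created by the nearest-neighbor exploration and reduce the problem to the almost sure convergence of a series of independent random variables. First I would reveal the edge weights sequentially along the tour (the principle of deferred decisions). When the salesman stands at the $i$-th visited city $v_i$, there remain $n-i$ unvisited cities, and the weights $\{L_{v_i,u}: u \text{ unvisited}\}$ are edges never inspected at any earlier step: a step $s<i$ only inspects edges incident to the current vertex $v_s$, and for $L_{v_i,u}$ this would force $v_s\in\{v_i,u\}$, which is impossible since $v_i$ is not yet current and $u$ is still unvisited. Hence, conditionally on the past, these $n-i$ weights are i.i.d.\ with law $F$, and the next tour edge $W_i := L_{v_i,v_{i+1}}$ is their minimum. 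Thus the forward weights $W_1,\dots,W_{n-1}$ are \emph{independent}, with $W_i$ distributed as the minimum of $n-i$ i.i.d.\ copies of $F$, whereas the closing edge $W_n=L_{v_1,v_n}$ is exceptional: it was already inspected at the first step and is not fresh. Writing $M_j$ for the minimum of $j$ i.i.d.\ samples from $F$, with $\{M_j\}_{j\ge 1}$ an independent family, and reindexing by $j=n-i$, I obtain the distributional identity $\sum_{i=1}^{n-1}(W_i-\bE[W_i]) \stackrel{d}{=} \sum_{j=1}^{n-1}(M_j-\bE[M_j])$.

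Next I decompose $\TNN-\bE[\TNN]=A_n+B_n$ with $A_n:=\sum_{i=1}^{n-1}(W_i-\bE[W_i])$ and $B_n:=W_n-\bE[W_n]$. Since the reindexed version of $A_n$ is the partial sum of a \emph{fixed} sequence of independent, mean-zero summands, Kolmogorov's two-series theorem reduces the convergence of $A_n$ to the single requirement $\sum_{j\ge 1}\Var(M_j)<\infty$; granting this, $\sum_j(M_j-\bE[M_j])$ converges almost surely, hence $A_n$ converges weakly. The crux is therefore the estimate of $\Var(M_j)$. From $f(t)/t^\alpha\to C$ one gets $F(t)\sim \tfrac{C}{1+\alpha}\,t^{1+\alpha}$ as $t\to 0+$; substituting $t=(cj)^{-1/(1+\alpha)}s$ with $c=C/(1+\alpha)$ in $\bE[M_j]=\int_0^\infty (1-F(t))^j\,dt$ and $\bE[M_j^2]=\int_0^\infty 2t\,(1-F(t))^j\,dt$, and passing to the limit (the contribution away from $0$ being controlled by the finiteness of the variance of $F$), yields $\bE[M_j]\asymp j^{-1/(1+\alpha)}$ and $\bE[M_j^2]\asymp j^{-2/(1+\alpha)}$, whence $\Var(M_j)=O\big(j^{-2/(1+\alpha)}\big)$. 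Consequently $\sum_j\Var(M_j)<\infty$ exactly when $2/(1+\alpha)>1$, i.e.\ when $\alpha<1$; the hypothesis $\alpha>-1$ is used to ensure that $f$ is a genuine density near $0$ (so $1+\alpha>0$ and $F(t)\to 0$) and that these moments are finite. I would isolate the two moment asymptotics as the auxiliary lemmas.

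It then remains to incorporate the closing edge $B_n$, where $W_n=L_{v_1,v_n}$ is the return weight. As noted, this edge was already inspected at the first step, so it is not one of the independent minima; here I would appeal to the separate analysis of the first and last edges, which shows that the last edge stays tight as $n\to\infty$ (indeed that it converges in distribution). Granting this, $A_n+B_n$ is tight, and I would upgrade tightness to weak convergence by establishing joint convergence of the pair $(A_n,B_n)$.

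I expect the main obstacle to be exactly this last step. The return edge $W_n$ and the final few forward edges $W_{n-1},W_{n-2},\dots$ all live at the very end of the exploration (small $j$, where $\Var(M_j)$ is largest and which therefore carry the bulk of the limiting fluctuation) and they share the terminal vertex $v_n$, whose identity is itself determined by the whole process. Their dependence does not obviously vanish, so one cannot simply add the two weak limits, and controlling it is precisely what the dedicated analysis of the first and last edges is for. By contrast, the variance summability of Step~2 — although it is where the sharp threshold $\alpha<1$ enters — is a routine, if careful, computation.
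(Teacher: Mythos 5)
Your first two steps are essentially the paper's own argument. The sequential--revelation independence of the forward minima is Lemma~\ref{min L= min W} (the representation $\sum_{i=2}^{n-1}\min_{i<j\leq n}L_{ij}\stackrel{d}{=}\sum_{i=1}^{n-2}W_i$ with $W_i=F^{-1}\left(1-e^{-Y_i/i}\right)$ independent), and your variance-summability computation, arriving at exactly the threshold $\frac{2}{1+\alpha}>1$, i.e.\ $\alpha<1$, is the content of Lemma~\ref{MG}; using Kolmogorov's two-series theorem instead of $\Lt$-bounded martingale convergence is an immaterial difference.

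Your third step, however, contains a genuine gap, and you flag it yourself without closing it: tightness of $A_n+B_n$ does not yield weak convergence, and you never establish the joint convergence of $(A_n,B_n)$ that your decomposition requires. The missing idea is structural and is what the paper proves in Proposition~\ref{Prop1}. Because the fresh edge weights inspected at each step are i.i.d.\ with a continuous law, the \emph{argmin} at each step is uniform over the candidates and independent of the \emph{values}; hence the entire sequence of vertex identities $v_2,\dots,v_n$ is independent of the forward minimum values revealed from step $2$ onwards. Consequently the closing edge $L_{1,v_n}$ --- a function of the step-$1$ values $\{L_{1j}\}_{j=2}^n$ and of the identity of $v_n$ --- is \emph{exactly} independent of $\sum_{i=2}^{n-1}\min_{i<j\leq n}L_{ij}$. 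So your worry that the closing edge is irreparably entangled with the last few forward edges because they ``share the terminal vertex $v_n$'' is unfounded: the values of those forward edges do not feel which vertex is terminal. The only genuine dependence is between the closing edge and the \emph{first} edge, since both are functions of the step-$1$ data (writing $X_k:=L_{1,k+1}$, the first edge is $X_{(1)}$ while the closing edge is uniform on $\{X_{(2)},\dots,X_{(n-1)}\}$). This is precisely why the paper groups the edges differently from you: the first edge goes with the last edge, not with the forward sum. With the decomposition
\begin{equation*}
\TNN \;\stackrel{d}{=}\; \sum_{i=2}^{n-1}\min_{i<j\leq n}L_{ij} \;+\; \left(L_n^{\mbox{first}}+L_n^{\mbox{last}}\right),
\end{equation*}
the two pieces are independent; Proposition~\ref{Prop1} shows the second piece converges in distribution to $F$ with converging mean, the first piece (centered) converges a.s.\ and in $\Lt$ by the variance estimate, and weak convergence of a sum of two independent weakly convergent sequences follows at once from multiplying characteristic functions. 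With this regrouping your argument closes; as written, it does not.
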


The three main results of the paper consider three cases of the behavior of $f$ near $0$. Theorem~\ref{Thm f(t)-->f(o)} 
covers the case when $f$ near zero converges to a constant. In this case, $\TNN$ scales as constant times $\log n$. 
Theorem~\ref{Thm f(t)/t^alpha, 0<alpha<1} and Theorem~\ref{Thm f(t)/t^alpha, -1<alpha<0} consider the cases when
$\lim_{t \rightarrow 0} f(t)$ is zero and infinity respectively.
We use the notation $a_{n} \sim b_{n}$ to denote $a_{n}$ is asymptotically equal to $b_{n}$, that is, 
$\displaystyle \lim_{n \longrightarrow \infty} \frac{a_{n}}{b_{n}}=1.$
\begin{theorem}
\label{Thm f(t)-->f(o)}
Assume that as $t\longrightarrow 0+, ~f(t)\longrightarrow f(0)$, 
where $f(0) \in (0,\infty)$. Then as $n\longrightarrow \infty,$ 
\begin{equation}
\frac{\TNN}{\log n} \stackrel{\Pro}{\longrightarrow} \frac{1}{f(0)} \label{Tn/logn-->1/f(o)}
\end{equation}
and 
\begin{equation}
\bE[\TNN] \sim \frac{1}{f(0)}\log n  \label{E(Tn)=o(logn/f(o))}.
\end{equation}
Moreover, convergence in \eqref{Tn/logn-->1/f(o)} happens in $\Lt$.
\end{theorem}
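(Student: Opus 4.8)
The plan is to reduce $\TNN$ to a sum of independent random variables and then run a first- and second-moment analysis. First I would record the exact distributional structure of the successive edge lengths. Write $X_i := L_{v_i v_{i+1}}$ for the $i$-th edge, so that $\TNN = \sum_{i=1}^{n-1} X_i + R_n$ with $R_n := L_{v_n v_1}$ the return edge. The crucial observation, which I would justify by an exploration (sequential revealing) argument, is that because the $L_{ij}$ are i.i.d.\ and no pair is queried twice, the distances from the current city $v_i$ to the $n-i$ still-unvisited cities are, conditionally on everything revealed so far, i.i.d.\ with law $F$; hence $X_i$ is distributed as the minimum of $n-i$ i.i.d.\ $F$ variables, and moreover $X_1,\ldots,X_{n-1}$ are mutually independent. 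Reindexing by $m=n-i$ gives $\sum_{i=1}^{n-1} X_i \stackrel{d}{=} \sum_{m=1}^{n-1} W_m$, with the $W_m$ independent and $W_m \stackrel{d}{=} \min(Y_1,\ldots,Y_m)$, $Y_j$ i.i.d.\ $\sim F$. The return edge $R_n$ must be treated separately, since $L_{v_1 v_n}$ was already inspected at the first step and so is not fresh; its contribution is controlled by the tightness of the first and last edges established in Section~\ref{last edge}.

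Next I would establish \eqref{E(Tn)=o(logn/f(o))} by a moment computation. Using $\bE[W_m]=\int_0^\infty (1-F(t))^m\,dt$ together with the hypothesis $f(t)\to f(0)\in(0,\infty)$, so that $F(t)=f(0)\,t\,(1+o(1))$ as $t\to 0+$, a Laplace-type estimate shows the integral is dominated by a shrinking neighborhood of $0$, where $(1-F(t))^m\approx e^{-m f(0) t}$, giving $\bE[W_m]\sim \tfrac{1}{f(0)\,m}$. Summing and using $\sum_{m=1}^{n-1} m^{-1}\sim \log n$ yields $\bE\big[\sum_m W_m\big]\sim \tfrac{1}{f(0)}\log n$. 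Since $F$ has finite mean, $\bE[R_n]=o(\log n)$, and combining these gives \eqref{E(Tn)=o(logn/f(o))}.

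For \eqref{Tn/logn-->1/f(o)} in probability I would simply invoke Theorem~\ref{TNN-E(TNN) converges}, which applies in the present case $\alpha=0$, $C=f(0)$: it gives that $\TNN-\bE[\TNN]$ is tight, hence $\tfrac{1}{\log n}(\TNN-\bE[\TNN])\stackrel{\Pro}{\longrightarrow}0$, and adding $\tfrac{\bE[\TNN]}{\log n}\to \tfrac{1}{f(0)}$ from the previous step finishes the in-probability statement. Upgrading to $\Lt$ requires a variance bound. By the same integral method, $\bE[W_m^2]=\int_0^\infty 2t(1-F(t))^m\,dt\sim \tfrac{2}{f(0)^2 m^2}$, so $\Var(W_m)=O(m^{-2})$; independence then gives $\Var\big(\sum_m W_m\big)=\sum_m \Var(W_m)=O(1)$, and finiteness of the variance of $F$ controls $\Var(R_n)$, so $\Var(\TNN)=O(1)$. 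Hence $\bE\big[(\tfrac{\TNN-\bE[\TNN]}{\log n})^2\big]=\Var(\TNN)/(\log n)^2\to 0$, which together with $\tfrac{\bE[\TNN]}{\log n}\to \tfrac{1}{f(0)}$ yields the claimed $\Lt$ convergence.

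The main obstacle I anticipate is twofold. The conceptual heart is the independence claim of the first paragraph: one must argue carefully that conditioning on the random identity of the current city and on the adaptively chosen set of revealed pairs does not bias the law of the fresh distances, which is where the mean-field exchangeability is essential and where a clean filtration argument is needed. The technical heart is the uniform control of the Laplace-type asymptotics for $\bE[W_m]$ and $\bE[W_m^2]$: one must show the contribution away from $0$ is exponentially negligible, using $1-F(t)\le 1-\delta$ there, and that the local approximation $F(t)\approx f(0)\,t$ can be integrated against $(1-F(\cdot))^m$ with a uniformly controlled error, which is precisely where the hypothesis $f(t)\to f(0)$ enters quantitatively.
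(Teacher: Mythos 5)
Your proposal is correct, but it reaches the two key estimates by a genuinely different route than the paper. Both arguments share the same skeleton: the exploration argument showing that the $i$-th edge is the minimum of $n-i$ fresh i.i.d.\ $F$-variables, mutually independent across steps, with the first and return edges treated separately via Proposition~\ref{Prop1}. The paper, however, never computes $\bE[W_m]$ directly: it writes $W_i = F^{-1}\bigl(1-e^{-Y_i/i}\bigr)$ (Lemma~\ref{min L= min W}), derives the pathwise asymptotics $f(0)W_i/(Y_i/i)\longrightarrow 1$ a.s., and then transfers this almost-sure statement to the means by two applications of martingale convergence (once for $\sum Y_i/i$ and once, via Lemma~\ref{MG}, for $\sum W_i$), obtaining $\bE[\sum W_i]/\log n \longrightarrow 1/f(0)$; the variance bound needed for $\Lt$ also comes from Lemma~\ref{MG}, whose proof is a fairly delicate truncation argument. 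You instead compute the moments head-on: $\bE[W_m]=\int_0^\infty (1-F(t))^m\,dt \sim \tfrac{1}{f(0)m}$ and $\bE[W_m^2]=\int_0^\infty 2t(1-F(t))^m\,dt \sim \tfrac{2}{f(0)^2m^2}$ by Laplace-type estimates (the contribution away from $0$ is exponentially negligible since $1-F(t)\le 1-F(\delta)<1$ there, and near $0$ one sandwiches $F(t)$ between $(f(0)\mp\epsilon)t$), then sums and applies Chebyshev. This is more elementary and self-contained --- it bypasses the quantile representation and both martingale arguments --- and it yields sharper term-by-term information about each edge. What the paper's route buys is reusability: the representation $W_i=F^{-1}(1-e^{-Y_i/i})$ and Lemma~\ref{MG} are the common engine behind all the other regimes ($0<\alpha<1$, $-1<\alpha<0$, the Weibull case), whereas your Laplace computation is tailored to $\alpha=0$. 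Two small points to tighten: (i) your return edge $R_n=L_{1v_n}$ is \emph{not} independent of $\sum_i X_i$ as you have grouped things, since $X_1=\min_{j>1} L_{1j}$ and $R_n$ are both functions of the distances out of city $1$; either group the first edge with $R_n$ (as the paper does in \eqref{Tnd}) or bound the covariance by Cauchy--Schwarz --- either way $\Var(\TNN)=O(1)$ survives; (ii) invoking Theorem~\ref{TNN-E(TNN) converges} for the in-probability statement is redundant, since your own $\Lt$ bound already implies it.
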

When the distribution $F$ is Exponential, the expected value of the length of NN tour among $n$ cities scales as $\log n$. This is a special case of Theorem~\ref{Thm f(t)-->f(o)}, when $f(0)=1$. The following corollary 
is a consequence of Theorem~\ref{Thm f(t)-->f(o)}.
\begin{corollary}
\label{Thm for iid exp}
In the mean field TSP, suppose $F$ is the Exponential distribution with mean one. Then $\TNN-\log n$ converges weakly.  
\end{corollary}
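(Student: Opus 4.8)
The plan is to reduce the statement to the two theorems just proved, together with an exact computation of $\bE[\TNN]$ that is available in the exponential case. For the Exponential distribution with mean one the density is $f(t)=e^{-t}$, so $f(t)\to 1=f(0)$ as $t\to 0+$ and, with $\alpha=0$ and $C=1$, the hypothesis $f(t)/t^{\alpha}\to C$ of Theorem~\ref{TNN-E(TNN) converges} holds. That theorem therefore already yields that $\{\TNN-\bE[\TNN]\}_{n\ge 1}$ converges weakly, say to a random variable $W$. Writing
\[
\TNN-\log n=\bigl(\TNN-\bE[\TNN]\bigr)+\bigl(\bE[\TNN]-\log n\bigr),
\]
it suffices, by Slutsky's theorem, to show that the deterministic sequence $\bE[\TNN]-\log n$ converges to a finite constant; the weak limit of $\TNN-\log n$ will then be $W$ shifted by that constant. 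Note that Theorem~\ref{Thm f(t)-->f(o)} alone is \emph{not} enough, since it only gives $\bE[\TNN]\sim\log n$, i.e. $\bE[\TNN]/\log n\to 1$, which is strictly weaker than convergence of $\bE[\TNN]-\log n$.

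So the real work is the second-order asymptotics of $\bE[\TNN]$, and here the memorylessness of the exponential is decisive. At step $i$ the algorithm sits at $v_i$ and minimizes over the weights $L_{v_i,u}$ to the $n-i$ not-yet-visited cities; none of these edges has been examined before, so conditionally on the history they are i.i.d. Exponential$(1)$, and their minimum $m_i:=L_{v_i,v_{i+1}}$ is Exponential$(n-i)$. Crucially this conditional law does not depend on the past, so $m_1,\dots,m_{n-1}$ are in fact independent with $m_i\sim\mathrm{Exp}(n-i)$; consequently
\[
\bE\Bigl[\textstyle\sum_{i=1}^{n-1} m_i\Bigr]=\sum_{k=1}^{n-1}\frac{1}{k}=H_{n-1},\qquad H_{n-1}-\log n\longrightarrow\gamma,
\]
the Euler--Mascheroni constant. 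Incidentally, by the R\'enyi representation $\sum_{i=1}^{n-1}m_i$ is distributed as the maximum of $n-1$ i.i.d. Exponential$(1)$ variables, so after centering by $\log n$ it converges to a Gumbel law; this identifies the ``bulk'' contribution to $W$.

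The one edge that escapes this clean description is the closing edge $m_n=L_{v_n,c_1}$ of the tour: since $L_{c_1,v_n}$ was already inspected at the very first step, it is not a fresh minimum, and $\bE[\TNN]=H_{n-1}+\bE[m_n]$. Thus everything comes down to showing that $\bE[m_n]$ converges to a finite limit. This is exactly the first-and-last-edge analysis carried out in Section~\ref{last edge}: the first edge $m_1\sim\mathrm{Exp}(n-1)$ has mean $1/(n-1)\to 0$, so the tightness of the first-plus-last edge weight established there forces $m_n$ to be tight, and combined with the limiting distribution and uniform integrability of $m_n$ this yields $\bE[m_n]\to\ell$ for some finite $\ell$. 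Putting the pieces together, $\bE[\TNN]-\log n\to\gamma+\ell$, whence $\TNN-\log n$ converges weakly, to $W+\gamma+\ell$.

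I expect the main obstacle to be precisely this control of the return edge $m_n$: upgrading the mere tightness of the closing edge to convergence of its expectation, which is what is needed to pin down $\bE[\TNN]-\log n$ rather than only $\bE[\TNN]/\log n$. Everything else is either a direct appeal to Theorem~\ref{TNN-E(TNN) converges} or the routine harmonic-sum computation enabled by the exponential's memorylessness.
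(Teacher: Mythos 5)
Your proposal is correct, but it is organized genuinely differently from the paper's own proof, so a comparison is worthwhile. The paper never passes through convergence of $\bE[\TNN]-\log n$ at all: it redoes the exponential-specific computation from scratch (the successive fresh minima are independent with $\min_{i<j\leq n}L_{ij}$ Exponential of mean $\frac{1}{n-i}$, so the centered sum is a martingale with variance bounded by $\sum_i \frac{1}{i^2}$, hence converges a.s.\ and in $\Lt$), then decomposes $\TNN-\log n$ into this centered sum, plus the deterministic sequence $\bE[\sum_{i=1}^{n-1}\min_{i<j\leq n}L_{ij}]-\log n=\sum_{k=1}^{n-1}\frac{1}{k}-\log n\to\gamma$, plus $L_n^{\mbox{first}}+L_n^{\mbox{last}}-\bE[L_n^{\mbox{first}}]$, and invokes Proposition~\ref{Prop1} only for the distributional convergence of the \emph{uncentered} first-plus-last edge and its independence from the middle sum. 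You instead black-box Theorem~\ref{TNN-E(TNN) converges} (correctly verified for $\alpha=0$, $C=1$) and reduce everything to second-order asymptotics of the mean, which forces you to prove $\bE[L_n^{\mbox{last}}]\to\ell$ -- a step the paper's decomposition never needs. That step is where your write-up is vaguest ("tightness \ldots uniform integrability"), but it is not a gap: Proposition~\ref{Prop1} proves exactly $\bE[L_n^{\mbox{first}}+L_n^{\mbox{last}}]\to\mu$ (and convergence of the second moment), so together with $\bE[L_n^{\mbox{first}}]=\frac{1}{n-1}\to 0$ you get $\ell=\mu=1$ directly, with no separate uniform-integrability argument. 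Your route buys modularity, and your observation that Theorem~\ref{Thm f(t)-->f(o)} alone is insufficient (it gives only $\bE[\TNN]\sim\log n$, even though the paper bills the corollary as a "consequence" of that theorem) is apt; the paper's route buys self-containedness and avoids needing the last edge's expectation to converge. Your R\'enyi/Gumbel side remark is also correct and identifies the weak limit of the bulk term more explicitly than the paper does.
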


\begin{theorem}
\label{Thm f(t)/t^alpha, 0<alpha<1}
Assume that as $t\longrightarrow 0+,~\frac{f(t)}{t^{\alpha}}\longrightarrow C$,
where $C>0$ is constant and  $0<\alpha <1$. Then as $n\longrightarrow \infty$, 
\begin{equation}
\frac{\TNN} {n^{1-\frac{1}{1+\alpha}}}\stackrel{\Pro}{\longrightarrow}K_{\alpha} \label{Tn/n^-->C}
\end{equation}
where
\[K_{\alpha}:=(\frac{1+\alpha}{C})^{\frac{1}{1+\alpha}} \frac{1+\alpha}{\alpha} \Gamma(1+\frac{1}{1+\alpha})\]
and
\begin{equation}
 \bE[\TNN] \sim K_{\alpha} n^{1-\frac{1}{1+\alpha}}  \label{E(Tn)=o(C)n}
 \end{equation}
 Moreover, convergence in \eqref{Tn/n^-->C} happens in $\Lt$.
\end{theorem}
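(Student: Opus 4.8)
The plan is to reduce $\TNN$ to a sum of independent minima and then read off the scaling from the asymptotics of the individual means and variances. I would write $\TNN = \sum_{i=1}^{n-1} X_i + R$, where $X_i = L_{v_i v_{i+1}}$ is the weight of the $i$-th edge chosen by the algorithm and $R = L_{v_n v_1}$ is the return edge. When the algorithm sits at $v_i$, having already visited $i$ cities, the distances from $v_i$ to the $n-i$ unvisited cities form pairs that were never exposed at earlier steps; conditioning on the past (which fixes the identity of $v_i$ and of the unvisited set) leaves these, by the mean field i.i.d.\ structure, equal in law to $n-i$ fresh i.i.d.\ copies of $F$. Hence $X_i$ is distributed as the minimum of $n-i$ i.i.d.\ copies of $F$, and, since distinct steps expose disjoint collections of edges, $X_1,\dots,X_{n-1}$ are mutually independent. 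Reindexing by $m=n-i$ and setting $Y_m := \min$ of $m$ i.i.d.\ copies of $F$, I get $\sum_{i=1}^{n-1} X_i \stackrel{d}{=} \sum_{m=1}^{n-1} Y_m$ with the $Y_m$ independent. The return edge $R$ was already revealed at the first step and is not fresh, but a short conditioning argument (made rigorous in the analysis of the first and last edges, Section~\ref{last edge}) shows that, given $X_1 = \min_j L_{1j} = x$, the edge $R$ has the law of $F$ conditioned to exceed $x$; since $x$ is small, $R$ is tight with bounded second moment, hence negligible after dividing by $n^{1-1/(1+\alpha)}\to\infty$ in both the in-probability and $\Lt$ senses.

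Next I would compute the moments of $Y_m$. Since $\Pro(Y_m > t) = (1-F(t))^m$, I have $\bE[Y_m] = \int_0^\infty (1-F(t))^m \, dt$, and the substitution $u = F(t)$ turns this into $\int_0^1 (1-u)^m / f(F^{-1}(u)) \, du$. The hypothesis $f(t)/t^\alpha \to C$ gives $F(t) \sim \frac{C}{1+\alpha} t^{1+\alpha}$, hence $f(F^{-1}(u)) \sim C^{1/(1+\alpha)}(1+\alpha)^{\alpha/(1+\alpha)} u^{\alpha/(1+\alpha)}$ as $u\to 0$. The integral is therefore governed near $0$ by the Beta integral $\int_0^1 (1-u)^m u^{-\alpha/(1+\alpha)}\, du = B\!\left(m+1,\tfrac{1}{1+\alpha}\right)$, and the ratio-of-Gamma asymptotic $B(m+1,\beta)\sim \Gamma(\beta) m^{-\beta}$ yields $\bE[Y_m] \sim \left(\frac{1+\alpha}{C}\right)^{1/(1+\alpha)} \Gamma\!\left(1+\tfrac{1}{1+\alpha}\right) m^{-1/(1+\alpha)}$, after using $\Gamma\!\left(\tfrac{1}{1+\alpha}\right) = (1+\alpha)\,\Gamma\!\left(1+\tfrac{1}{1+\alpha}\right)$ to collect constants. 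The same computation applied to $\bE[Y_m^2] = \int_0^\infty 2t (1-F(t))^m\, dt$ gives $\bE[Y_m^2] \asymp m^{-2/(1+\alpha)}$, so $\Var(Y_m) = O\!\left(m^{-2/(1+\alpha)}\right)$.

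Then I would sum. With $\beta := 1/(1+\alpha) \in (\tfrac12,1)$, the estimate $\sum_{m=1}^{n-1} m^{-\beta} \sim n^{1-\beta}/(1-\beta)$ together with $1-\beta = \alpha/(1+\alpha)$ and $1/(1-\beta) = (1+\alpha)/\alpha$ gives $\sum_{m=1}^{n-1}\bE[Y_m] \sim K_\alpha\, n^{1-1/(1+\alpha)}$, which is exactly \eqref{E(Tn)=o(C)n}. The crucial gain from $0<\alpha<1$ is that $2\beta = 2/(1+\alpha) > 1$, so $\sum_{m\ge1}\Var(Y_m)$ is a convergent series; by independence of the $X_i$ this forces $\Var\!\left(\sum_{i=1}^{n-1} X_i\right) = O(1)$, and the bounded second moment of $R$ keeps $\Var(\TNN) = O(1)$ bounded in $n$.

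Finally, writing $\TNN/n^{1-1/(1+\alpha)} - K_\alpha = (\TNN - \bE[\TNN])/n^{1-1/(1+\alpha)} + \bigl(\bE[\TNN]/n^{1-1/(1+\alpha)} - K_\alpha\bigr)$, the second term is deterministic and tends to $0$ by the mean asymptotics, while the first has squared $\Lt$-norm $\Var(\TNN)/n^{2(1-1/(1+\alpha))}\to 0$ because $\Var(\TNN)$ is bounded and $\alpha>0$. This establishes the $\Lt$ convergence in \eqref{Tn/n^-->C}, which in turn implies convergence in probability; alternatively, the in-probability statement follows at once from Theorem~\ref{TNN-E(TNN) converges}, since $\TNN - \bE[\TNN]$ is then tight and hence $o_{\Pro}(n^{1-1/(1+\alpha)})$. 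The main obstacle I anticipate is not this last probabilistic step but making the moment asymptotics uniform: one must control the contribution of the non-small-$t$ range of the integrals, where $1-F(t)$ is bounded away from $1$ so that $(1-F(t))^m$ is exponentially small in $m$, and justify that the error in $\bE[Y_m]\sim c\,m^{-\beta}$, together with its inaccuracy for small $m$, sums to lower order than $n^{1-\beta}$.
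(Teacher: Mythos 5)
Your proposal is correct, and it reaches the theorem by a genuinely different route from the paper. Both arguments start from the same structural decomposition --- the NN tour is, in distribution, a sum of independent minima of $n-i$ fresh i.i.d.\ edge weights plus the first and last edges, the latter handled exactly as in Proposition~\ref{Prop1} --- and both hinge on the same crucial consequence of $0<\alpha<1$, namely that $\frac{2}{1+\alpha}>1$ makes the variances summable. The difference is the engine for the mean asymptotics. The paper works with the representation $W_i = F^{-1}\left(1-e^{-Y_i/i}\right)$ of Lemma~\ref{min L= min W}, sandwiches $F^{-1}$ near $0$, compares $\sum W_i$ almost surely with $\sum (Y_i/i)^{1/(1+\alpha)}$, and then extracts \eqref{E(Tn)=o(C)n} \emph{indirectly}: the a.s.\ convergence of the ratio combined with the a.s.\ convergence of the centered sum from the martingale argument of Lemma~\ref{MG} forces $\bE[\sum W_i]/n^{1-\frac{1}{1+\alpha}} \rightarrow K_\alpha$. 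You instead compute $\bE[Y_m]$ and $\bE[Y_m^2]$ directly from tail-integral formulas, identify the Beta-integral asymptotics (your constant bookkeeping is right and reproduces $K_\alpha$), sum the asymptotics, and finish with Chebyshev; this is more elementary and self-contained, avoiding the martingale convergence theorem and the delicate truncation via the random index $I_0$ inside Lemma~\ref{MG}, at the price of the uniformity issue you correctly flag (split each integral at a fixed $\delta$, use exponential smallness of $(1-F(t))^m$ on $[\delta,\infty)$, and a Ces\`{a}ro argument to sum $\bE[Y_m]=c\,m^{-\beta}(1+o(1))$ --- all routine). What the paper's route buys is reusability: Lemma~\ref{MG} also powers Theorem~\ref{TNN-E(TNN) converges} and the other regimes of $\alpha$, whereas your moment computation is tailored to this theorem. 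Two small points to tighten: since your return edge $R$ is not independent of your $X_i$'s (it reuses a step-one edge), conclude $\Var(\TNN)=O(1)$ via Cauchy--Schwarz on the cross term rather than by independence; and the uniformly bounded second moment of $R$ itself requires the order-statistics computation of Proposition~\ref{Prop1}, which you correctly defer to Section~\ref{last edge}.
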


\begin{theorem}
\label{Thm f(t)/t^alpha, -1<alpha<0}
Let $-1<\alpha<0$ and assume that as $t\longrightarrow 0+,\,\frac{f(t)}{t^{\alpha}}\longrightarrow C$,
where $C>0$ is constant. Then the sequence $\lbrace\bE[\TNN]\rbrace_{n\geq1}$, is a convergent sequence 
and $\TNN$ converges weakly.
\end{theorem}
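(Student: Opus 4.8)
The plan is to reduce both assertions to the convergence of $\mathbb{E}[\TNN]$, since once that is known the weak convergence of $\TNN$ itself follows from Theorem~\ref{TNN-E(TNN) converges}. First I would record the structural description of the tour that underlies the whole analysis (and which I would isolate as an auxiliary result in Section~\ref{Sec:Aux}): when the algorithm sits at the $i$-th visited city, the $n-i$ distances from it to the not-yet-visited cities were never inspected at any earlier step, so by the mean-field independence they are i.i.d.\ with law $F$ and independent of the past. Hence the $i$-th traversed edge is distributed as $M_{n-i}$, the minimum of $n-i$ i.i.d.\ $F$-variables, the successive minima are mutually independent, and, isolating the closing edge,
\begin{equation*}
\TNN = \sum_{k=1}^{n-1} M_{k} + L_{v_{n} v_{1}} .
\end{equation*}

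Next I would estimate $\mathbb{E}[M_k] = \int_0^\infty \left(1-F(t)\right)^k\,dt$. The hypothesis $f(t)/t^\alpha \to C$ forces $F(t) \sim \frac{C}{1+\alpha}\,t^{1+\alpha}$ as $t\to 0+$, so $F$ is regularly varying of index $1+\alpha$ at the origin. The part of the integral with $t$ bounded away from $0$ is geometrically small in $k$ because $F$ has finite mean, and a substitution (Laplace-type) argument on the remaining part gives
\begin{equation*}
\mathbb{E}[M_k] \sim \Gamma\!\left(1+\tfrac{1}{1+\alpha}\right)\left(\tfrac{1+\alpha}{C}\right)^{\frac{1}{1+\alpha}} k^{-\frac{1}{1+\alpha}} .
\end{equation*}
The sign of $\alpha$ is decisive here: for $-1<\alpha<0$ we have $\frac{1}{1+\alpha}>1$, so $\sum_{k\ge 1}\mathbb{E}[M_k]<\infty$ and therefore $\mathbb{E}\!\left[\sum_{k=1}^{n-1}M_k\right]$ converges to a finite limit. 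This is exactly where the present case departs from Theorems~\ref{Thm f(t)-->f(o)} and~\ref{Thm f(t)/t^alpha, 0<alpha<1}, where $\frac{1}{1+\alpha}\le 1$ and the series diverges, producing the growth of $\TNN$.

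It remains to show that $\mathbb{E}[L_{v_n v_1}]$, the expected weight of the return edge, converges; this I would draw from the study of the first and last edges in Section~\ref{last edge}. The return edge was already inspected at the very first step and is thus an $F$-edge conditioned to exceed the minimum $M_{n-1}$ seen from $v_1$; since $M_{n-1}\to 0$ in $\Lt$, this conditioning washes out and $\mathbb{E}[L_{v_n v_1}]$ converges to the mean of $F$. Combining the two terms shows $\{\mathbb{E}[\TNN]\}_{n\ge 1}$ converges. Weak convergence of $\TNN$ is then immediate: $-1<\alpha<0$ lies in the range of Theorem~\ref{TNN-E(TNN) converges}, so $\TNN-\mathbb{E}[\TNN]$ converges weakly, and adding back the convergent deterministic sequence $\mathbb{E}[\TNN]$ gives the weak convergence of $\TNN$ by Slutsky's theorem. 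I expect the main obstacle to be the honest treatment of the return edge $L_{v_n v_1}$: unlike the fresh minima $M_k$, it is an edge already examined at the start, so its limiting contribution has to be extracted from the tightness analysis of Section~\ref{last edge} rather than from the independent-minima decomposition.
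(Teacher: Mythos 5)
Your proposal is correct, and its overall skeleton coincides with the paper's: the same decomposition of $\TNN$ into independent successive minima plus the return edge (the paper's equation~\eqref{Tndold}), the same reduction of weak convergence to Theorem~\ref{TNN-E(TNN) converges} plus convergence of the deterministic sequence $\bE[\TNN]$, and the same appeal to Section~\ref{last edge} for $\bE[L_n^{\mbox{last}}] \to \mu$ (Proposition~\ref{Prop1} indeed yields this, since given the edges $X_1,\dots,X_{n-1}$ at city $1$ the last edge is uniform on the order statistics $X_{(2)},\dots,X_{(n-1)}$, exactly your ``conditioned to exceed the minimum'' description). Where you genuinely depart from the paper is the crux, the convergence of $\sum_k \bE[M_k]$: the paper proves this via Lemma~\ref{expand E[TNN]}, which packages $\bE[\TNN]$ as the single integral
\[
\int_{0}^{\infty} \frac{\left[\bar{F}(t)\right]^2\left[1-\left(\bar{F}(t)\right)^{n-2}\right]}{F(t)}\,dt ,
\]
and then applies dominated convergence with an explicit dominating function $G$, whose integrability near $0$ comes from $1/F(t) \asymp t^{-(1+\alpha)}$ being integrable there (i.e.\ $1+\alpha<1$); you instead estimate each term, $\bE[M_k] \sim \Gamma\bigl(1+\tfrac{1}{1+\alpha}\bigr)\bigl(\tfrac{1+\alpha}{C}\bigr)^{\frac{1}{1+\alpha}}k^{-\frac{1}{1+\alpha}}$, by a Laplace-type computation and conclude by summability of the $p$-series with exponent $\tfrac{1}{1+\alpha}>1$. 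These are dual forms of the same underlying fact (integral test versus series), but the mechanics differ and each buys something: your route gives explicit term-wise asymptotics --- hence a rate, a transparent explanation of why $\alpha<0$ is exactly the threshold, and a constant matching $K_\alpha$ of Theorem~\ref{Thm f(t)/t^alpha, 0<alpha<1} --- while the paper's DCT route needs only crude one-sided bounds on $F$ near $0$ and no asymptotic evaluation at all. One small simplification available to you: for summability you only need an upper bound on $\bE[M_k]$, which follows from $1-x\le e^{-x}$ and the one-sided bound $F(t)\ge \frac{C-\epsilon}{1+\alpha}t^{1+\alpha}$ on $(0,\delta)$, so the full two-sided Laplace asymptotics can be dispensed with.
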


The above results cover the cases where $\vert \alpha \vert< 1$. Note that the case $\alpha \leq -1$ cannot happen, since $f$ is a 
density function. For  $\alpha \geq 1$ we do not have any general result except for the particular choice of $F$, namely when 
$F$ is Weibull distribution with shape parameter $(1+\alpha)$ and scale parameter $1$, 
we show in the following theorem that after proper scaling,  the weak limit distribution of $\TNN$ is Normal. 
\begin{theorem}
\label{Prop f(t)/t^alpha,alpha>=1}
Let  $\alpha \geq 1$ and for $1\leq i \leq n-1$, the intercity distances $\left\{L_{ij}\right\}_{i< j \leq n }$  in mean field TSP be i.i.d. Weibull distribution with shape parameter $(1+\alpha)$ and scale parameter $1$, i.e.,
\[f(t)=(1+\alpha)t^\alpha e^{-t^{(1+\alpha)}}\bone\left(t>0\right)\,.\]
Then as $n\longrightarrow \infty $, for $\alpha >1$
\begin{equation}
\frac{\TNN-\bE[\TNN]}{n^{\frac{1}{2}-\frac{1}{1+\alpha}}}\stackrel{d}{\longrightarrow}  N( 0,\,\frac{\alpha+1}{\alpha-1} \sigma^2(\alpha))  
\label{Tn/n has Normal dist}\end{equation}
and for $\alpha=1$, 
\begin{equation}
\frac{\TNN-\bE[\TNN]}{\sqrt{\log n}}\stackrel{d}{\longrightarrow}  N(0,\sigma^2(\alpha)) 
\label{Tn/logn has Normal dist}\end{equation}
where $\sigma^2(\alpha)=\Gamma(\frac{2}{1+\alpha}+1)-\Gamma^{2}(1+\frac{1}{1+\alpha})$.
\end{theorem}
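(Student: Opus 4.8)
The plan is to exploit the structural decomposition of the nearest neighbor tour that underlies all the earlier results. Writing $D_i := L_{v_i v_{i+1}}$ for the weight of the $i$-th edge, the principle of deferred decisions shows that in the mean field model the distances emanating from the newly reached city $v_i$ to the still unvisited cities have not been inspected by the algorithm before step $i$, so conditionally on the history they are $n-i$ fresh i.i.d.\ $F$-variables. Consequently $D_1,\dots,D_{n-1}$ are independent with $D_i \stackrel{d}{=} M_{n-i}$, where $M_k$ denotes the minimum of $k$ i.i.d.\ $F$-variables, and
\[
\TNN = \sum_{i=1}^{n-1} D_i + L_{v_n v_1},
\]
so that the main term $S_n := \sum_{i=1}^{n-1} D_i$ has the law of $\sum_{k=1}^{n-1} M_k$ with independent summands, while $L_{v_n v_1}$ is the return edge treated in Section~\ref{last edge}.

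For the Weibull density $f(t) = (1+\alpha)t^{\alpha} e^{-t^{1+\alpha}}$ the survival function is $e^{-t^{1+\alpha}}$, hence $\Pro(M_k > t) = e^{-k t^{1+\alpha}}$ and $M_k \stackrel{d}{=} k^{-1/(1+\alpha)} W$ with $W$ a standard Weibull. This yields the closed forms $\bE[M_k] = k^{-1/(1+\alpha)}\Gamma(1+\tfrac{1}{1+\alpha})$ and $\Var(M_k) = k^{-2/(1+\alpha)}\sigma^2(\alpha)$. Summing, $s_n^2 := \Var(S_n) = \sigma^2(\alpha)\sum_{k=1}^{n-1} k^{-2/(1+\alpha)}$, and the standard asymptotics of power sums give, for $\alpha > 1$, $s_n^2 \sim \tfrac{\alpha+1}{\alpha-1}\sigma^2(\alpha)\, n^{(\alpha-1)/(\alpha+1)}$ (using $1 - \tfrac{2}{1+\alpha} = \tfrac{\alpha-1}{\alpha+1}$), whereas at the boundary $\alpha = 1$ the exponent equals $1$ and $s_n^2 \sim \sigma^2(\alpha)\log n$. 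Since $2\bigl(\tfrac12 - \tfrac{1}{1+\alpha}\bigr) = \tfrac{\alpha-1}{\alpha+1}$, the normalisations $n^{1/2 - 1/(1+\alpha)}$ and $\sqrt{\log n}$ are precisely of order $s_n$, and the target variances $\tfrac{\alpha+1}{\alpha-1}\sigma^2(\alpha)$ and $\sigma^2(\alpha)$ fall out.

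Next I would prove the central limit theorem for $S_n - \bE[S_n]$, a sum of independent but non-identically distributed summands, by verifying the Lyapunov condition with $\delta = 2$. Since $M_k - \bE[M_k] = k^{-1/(1+\alpha)}(W - \bE[W])$ and the Weibull law has finite moments of all orders, $\bE|M_k - \bE[M_k]|^{4} = c\,k^{-4/(1+\alpha)}$, so $\sum_{k=1}^{n-1}\bE|M_k - \bE[M_k]|^{4}$ is either bounded or grows like $n^{1 - 4/(1+\alpha)}$; comparing with $s_n^4$ in both regimes an exponent count shows the Lyapunov ratio tends to $0$ (a negative power of $n$ when $\alpha > 1$, and a growing power of $\log n$ in the denominator when $\alpha = 1$). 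Hence $(S_n - \bE[S_n])/s_n \stackrel{d}{\longrightarrow} N(0,1)$.

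Finally I would absorb the return edge. By the tightness of the first and last edge weights established in Section~\ref{last edge}, $L_{v_n v_1} - \bE[L_{v_n v_1}]$ is $O_{\Pro}(1)$, and since both normalising sequences tend to infinity, dividing by them sends this term to $0$ in probability; Slutsky's theorem then transfers the Gaussian limit from $S_n$ to $\TNN$, and rewriting $s_n$ in terms of the two normalisations yields \eqref{Tn/n has Normal dist} and \eqref{Tn/logn has Normal dist}. I expect the main obstacle to be the bookkeeping at the boundary $\alpha = 1$, where the generic scaling $n^{1/2 - 1/(1+\alpha)}$ degenerates to a constant while the true order of the fluctuations is $\sqrt{\log n}$; this forces the variance sum and the Lyapunov check to be carried out separately for $\alpha = 1$, and it is exactly where an incautious reuse of the $\alpha > 1$ asymptotics would produce the wrong normalisation.
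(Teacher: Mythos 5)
Your proposal is correct and follows essentially the same route as the paper: both reduce $\TNN$ to a sum of independent minima $M_{n-i}$ (the paper's Lemma~\ref{min L= min W} gives exactly the representation $\sum_i W_i$ with $W_i \stackrel{d}{=} (Y_i/i)^{1/(1+\alpha)} \stackrel{d}{=} i^{-1/(1+\alpha)}W$), verify a Lyapunov condition for this triangular array, compute the variance asymptotics separately for $\alpha>1$ and $\alpha=1$, and dispose of the first/last edges via Proposition~\ref{Prop1} and Slutsky. The only cosmetic differences are your fixed choice $\delta=2$ in the Lyapunov condition (handled by comparing growth rates, versus the paper's choice $\delta>\alpha-1$ which makes the numerator sum convergent) and your keeping the first edge inside the main sum rather than splitting it off.
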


\section{The last and the first edges of the NN tour}
\label{last edge}
Let the distances between cities be denoted by $\left\{(L_{ij})_{i< j \leq n}\right\}_{1\leq i \leq n-1}$ which are i.i.d with distribution 
$F$ supported on $[0,\infty)$ and density $f$. 
Let $L_n^{\mbox{last}}$ be the length of the last edge, which joins the last visited city to the first city. Then the length of NN tour, $\TNN$,  
can be written as
\begin{equation} 
\TNN \stackrel{d}{=} \sum_{i=1}^{n-1} \displaystyle \min_{{i < j \leq n}}L_{ij} + L_n^{\mbox{last}} \label{Tndold} 
\end{equation}
Let $L_n^{\mbox{first}}:=\displaystyle \min_{{1 < j \leq n}}L_{1j}$. Then \eqref{Tndold} can be rewritten as, 
\begin{equation} 
\TNN \stackrel{d}{=} \sum_{i=2}^{n-1} \displaystyle \min_{{i < j \leq n}}L_{ij} + L_n^{\mbox{first}} + L_n^{\mbox{last}} \label{Tnd} 
\end{equation}
The following proposition shows that the sum of the lengths of the last and first edges in NN tour do not play an important role.
\begin{proposition}
\label{Prop1}
In the NN tour for mean field TSP, the distribution function of $L_n^{\mbox{first}} + L_n^{\mbox{last}}$ converges to $F$ as $n \longrightarrow \infty$ and $\displaystyle\sum_{i=2}^{n-1} \min_{i < j \leq n} L_{ij}$ is independent of $L_n^{\mbox{first}} + L_n^{\mbox{last}}$. 
Moreover as $n \longrightarrow \infty$, 
\[ \bE\left[ L_n^{\mbox{first}} + L_n^{\mbox{last}}\right] \longrightarrow \mu \]
and 
\[\bE\left[ \left( L_n^{\mbox{first}} + L_n^{\mbox{last}} \right)^2\right] \longrightarrow \mu^2+\sigma^2 \,,\] 
where $\mu$ and $\sigma^2$ are the mean and the variance of $F$. 
\end{proposition}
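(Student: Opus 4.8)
The plan is to work with the actual nearest neighbor tour and to separate the two disjoint families of edge weights that control the two quantities in the statement. I would write $\mathcal{X} := \{L_{1j} : 2 \le j \le n\}$ for the weights incident to the starting city and $\mathcal{D} := \{L_{ij} : 2 \le i < j \le n\}$ for the remaining internal weights; as the $L_{ij}$ are i.i.d., $\mathcal{X}$ and $\mathcal{D}$ are independent. The quantity $\sum_{i=2}^{n-1}\min_{i<j\le n} L_{ij}$ is a deterministic function $g(\mathcal{D})$ of $\mathcal{D}$ alone. The first leg of the tour reveals exactly $\mathcal{X}$: setting $M_n := \min\mathcal{X}$ and $I := \arg\min_{2\le j \le n} L_{1j}$, we have $L_n^{\mbox{first}} = M_n$ and $v_2 = I$. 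After $c_1$ is left, the rest of the tour is the nearest neighbor path on $\{c_2,\ldots,c_n\}$ started at $c_I$, so its last vertex is $v_n = \phi_{\mathcal{D}}(I)$, where $\phi_{\mathcal{D}}$ is the ``endpoint of the NN path'' map determined by $\mathcal{D}$. Crucially $\phi_{\mathcal{D}}(I)\ne I$ for $n\ge 3$, since that path visits at least two cities, and hence $L_n^{\mbox{last}} = L_{1,v_n} = L_{1,\phi_{\mathcal{D}}(I)}$ is the element of $\mathcal{X}$ sitting at the $\mathcal{D}$-measurable index $\phi_{\mathcal{D}}(I)$.

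First I would prove independence, which simultaneously pins down the joint law. Conditioning on $\mathcal{D}$ freezes $\phi := \phi_{\mathcal{D}}$ into a fixed map with $\phi(i)\ne i$, while $\mathcal{X}$ keeps its i.i.d. law. For any $i$ and any $j \ne i$,
\[
\Pro\bigl(I = i,\ M_n \le a,\ L_{1j}\le b\bigr) = \int_0^a f(x)\,(1-F(x))^{n-3}\,(F(b)-F(x))^{+}\,dx ,
\]
an expression that does not depend on the pair $(i,j)$. Summing over $i$ with $j=\phi(i)$ shows that the conditional law of $\left(L_n^{\mbox{first}}, L_n^{\mbox{last}}\right)$ given $\mathcal{D}$ is the \emph{same} for every admissible $\phi$, hence is free of $\mathcal{D}$. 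Therefore $\left(L_n^{\mbox{first}}, L_n^{\mbox{last}}\right)$ is independent of $\mathcal{D}$, and in particular of $g(\mathcal{D}) = \sum_{i=2}^{n-1}\min_{i<j\le n} L_{ij}$, which is the independence assertion. The same computation identifies the conditional law: $L_n^{\mbox{first}} = M_n$, and given $M_n = m$ the variable $L_n^{\mbox{last}}$ is distributed as $F$ truncated to $(m,\infty)$.

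With this description the remaining assertions reduce to elementary truncation estimates. Since $M_n$ is non-increasing in $n$ with $M_n\downarrow 0$ almost surely and $M_n \le L_{12}$ with $\bE[L_{12}^2]<\infty$, dominated convergence gives $\bE[M_n^k]\to 0$ for $k=1,2$. For the last edge, $\Pro\left(L_n^{\mbox{last}}\le b\right) = \bE\bigl[(F(b)-F(M_n))^{+}/(1-F(M_n))\bigr]\to F(b)$ by bounded convergence, so $L_n^{\mbox{last}}$ converges weakly to $F$; Slutsky's theorem then yields that $L_n^{\mbox{first}}+L_n^{\mbox{last}}$ converges weakly to $F$, the first assertion. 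For the moments, put $g_k(m) := \bE[X^k\mid X>m]$ with $X\sim F$, so that $\bE\bigl[(L_n^{\mbox{last}})^k\bigr] = \bE[g_k(M_n)]$. Because truncating from below only raises the mean of the increasing map $x\mapsto x^k$, while $g_k(m)\le \bE[X^k]/(1-F(m))$, the sandwich
\[
\bE[X^k] \le \bE[g_k(M_n)] \le \bE[X^k]\,\bE\!\left[\frac{1}{1-F(M_n)}\right] = \bE[X^k]\,\frac{n-1}{n-2}
\]
(the last equality because $F(M_n)$ is the minimum of $n-1$ i.i.d.\ uniforms) forces $\bE\bigl[(L_n^{\mbox{last}})^k\bigr]\to\bE[X^k]$, i.e.\ to $\mu$ for $k=1$ and to $\mu^2+\sigma^2$ for $k=2$. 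Together with $\bE[M_n^k]\to 0$ and the Cauchy--Schwarz bound $0\le \bE\bigl[M_n L_n^{\mbox{last}}\bigr]\le \sqrt{\bE[M_n^2]\,\bE[(L_n^{\mbox{last}})^2]}\to 0$, this gives $\bE\bigl[L_n^{\mbox{first}}+L_n^{\mbox{last}}\bigr]\to\mu$ and $\bE\bigl[(L_n^{\mbox{first}}+L_n^{\mbox{last}})^2\bigr]\to\mu^2+\sigma^2$.

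I expect the only real obstacle to be the structural step: checking rigorously that the last vertex is $v_n = \phi_{\mathcal{D}}(I)$ with $\phi_{\mathcal{D}}$ measurable with respect to $\mathcal{D}$ and satisfying $\phi_{\mathcal{D}}(I)\ne I$, and that the order-statistics probability above is genuinely a function of $j\ne i$ only. This exchangeability is exactly what decouples the first and last edges from the internal weights; once it is secured, the weak limit and the two moment limits follow from the routine truncation arguments above.
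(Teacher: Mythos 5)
Your proof is correct, but it takes a genuinely different route from the paper's. The paper argues dynamically: by exchangeability, given the tour so far each unvisited city is equally likely to be visited next, so the last vertex $v_n$ is uniform on $\{2,\ldots,n\}\setminus\{v_2\}$, and hence, conditionally on the weights $X_k = L_{1,k+1}$ incident to the starting city, the last edge is uniform on the non-minimal order statistics $\{X_{(2)},\ldots,X_{(n-1)}\}$. This yields the exact identity $\bE\left[h\left(L_n^{\mbox{last}}\right)\right] = \frac{n-1}{n-2}\bE\left[h(X_1)\right] - \frac{1}{n-2}\bE\left[h\left(X_{(1)}\right)\right]$, from which the weak convergence (bounded continuous $h$) and both moment limits (taking $h(x)=x$ and $h(x)=x^2$, plus Cauchy--Schwarz for the cross term) all follow in one stroke, with no sandwich estimates. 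You instead work statically: you split the weights into $\mathcal{X}$ (incident to city $1$) and $\mathcal{D}$ (internal), introduce the endpoint map $\phi_{\mathcal{D}}$, and use exchangeability of $\mathcal{X}$ to show that the conditional law of $\left(L_n^{\mbox{first}}, L_n^{\mbox{last}}\right)$ given $\mathcal{D}$ is the same for every admissible $\phi_{\mathcal{D}}$; your description of the last edge given $M_n=m$ as $F$ truncated to $(m,\infty)$ is exactly the integrated form of the paper's uniform-over-order-statistics description, and your sandwich bound via $\bE\left[1/(1-F(M_n))\right]=\frac{n-1}{n-2}$ replaces the paper's exact identity. What your route buys is an actual proof of the independence assertion: the paper states it in the proposition but never argues it in the proof, and it is precisely your $\phi_{\mathcal{D}}$ device that makes it rigorous (this matters, since the independence is invoked later, e.g.\ in Lemma~\ref{min L= min W} and in the variance decompositions in the proofs of the main theorems). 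What the paper's route buys is brevity and slightly greater generality of mechanism: the order-statistics identity makes all three limits immediate, whereas your truncation argument, though elementary and correct, needs the additional monotonicity fact $\bE[X^k \mid X>m] \geq \bE[X^k]$ and the Beta-type integral for $\bE\left[1/(1-F(M_n))\right]$.
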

 
\begin{proof} 
For $k=1,2, \ldots, n-1$, let $X_k:= L_{1k+1}$ and $X_{(k)}$ be the $k^{th}$ order statistic of $X_1, X_2, \ldots, X_{n-1}$. Note that by assumption
$X_k$'s are i.i.d. $F$.\\
Notice that by construction the successive vertices $1 = v_1, v_2, v_3, \ldots, v_n$ of the tour have the property that for every $2 \leq k \leq n$ given 
$\left\{v_2, v_3, \cdots, v_{k-1}\right\}$ the vertex $v_k$ is uniformly distributed on the set 
$\left\{1, 2, \ldots, n\right\} \setminus \left\{1, v_2, v_3, \cdots, v_{k-1}\right\}$. Thus for every $3 \leq k \leq n$ given $v_2$,  the vertex $v_k$
is uniformly distributed on the set $\left\{2, 3, \ldots, n\right\} \setminus \left\{v_2\right\}$. So in particular the last vertex of the tour $v_n$
is also uniformly distributed on the set $\left\{2, 3, \ldots, n\right\} \setminus \left\{v_2\right\}$.
Hence given $X_1, X_2, \ldots, X_{n-1}$, the length of the last edge is uniform on $\left\{ X_{(2)}, X_{(3)}, \ldots, X_{(n-1)}\right\}$. 
Now for any  bounded continuous function $h$ we have, 
\begin{align*}
\bE\left[h\left( L_n^{\mbox{last}}\right)  \right]  & = \frac{1}{n-2} \sum_{k=2}^{n-1} \bE\left[h\left(X_{(k)}\right)  \right]\\
& = \frac{1}{n-2} \sum_{k=1}^{n-1} \bE\left[h\left(X_{(k)}\right)  \right] - \frac{\bE\left[h\left(X_{(1)}\right)  \right]}{n-2} \\
&=\frac{1}{n-2} \sum_{k=1}^{n-1} \bE\left[h\left(X_k\right)  \right] - \frac{\bE\left[h\left(X_{(1)}\right)  \right]}{n-2}\\
&=\frac{n-1}{n-2} \bE\left[h\left(X_1\right)  \right] - \frac{\bE\left[h\left(X_{(1)}\right)  \right]}{n-2}.
\end{align*} 
Therefore 
\[
\lim_{n \longrightarrow\infty}\bE\left[h\left( L_n^{\mbox{last}}\right)  \right]=\bE\left[h\left(X_1\right)  \right] \,,
\]
for every bounded continuous function $h$, thus the distribution function of
$L_n^{\mbox{last}} $ converges to $F$ as $n \longrightarrow \infty$.
Now observe that $L_n^{\mbox{first}} \longrightarrow 0$ almost surely, so by Slutsky's theorem we have the distribution function of 
$L_n^{\mbox{first}} + L_n^{\mbox{last}} $ converges to $F$ as $n \longrightarrow \infty$.

Now observe that by similar calculations as above
\[
\bE\left[ L_n^{\mbox{first}} + L_n^{\mbox{last}} \right] = \frac{n-1}{n-2} \bE\left[X_1\right] 
                                                           + \frac{n-3}{n-2} \bE\left[X_{(1)}\right] \longrightarrow \mu \,.
\]
The last limit follows from the dominated convergence theorem by observing that $X_{(1)} \longrightarrow 0$ almost surely and $0 \leq X_{(1)} \leq X_1$.

Further, 
\[
\bE\left[ \left(L_n^{\mbox{last}}\right)^2 \right] = \frac{n-1}{n-2} \bE\left[X_1^2 \right] - \frac{\bE\left[X_{(1)}^2\right]}{n-2} \longrightarrow \mu^2 + \sigma^2 \,,
\]
and
\[
\bE\left[ \left(L_n^{\mbox{first}}\right)^2 \right] = \bE\left[X_{(1)}^2\right] \longrightarrow 0 \,.
\]
Finally,
\begin{align*}
       \bE\left[ L_n^{\mbox{first}} L_n^{\mbox{last}} \right]
& =    \frac{n-1}{n-2} \bE\left[ X_{(1)} \bar{X}_{n-1} \right] - \frac{\bE\left[X_{(1)}^2\right]}{n-2} 
       \qquad \left[\mbox{where\ } \bar{X}_{n-1} := \frac{1}{n-1} \sum_{k=1}^{n-1} X_k \right]\\
& \leq \sqrt{\bE\left[ X_{(1)}^2 \right] \, \bE\left[ \bar{X}_{n-1}^2 \right]} - \frac{\bE\left[X_{(1)}^2\right]}{n-2}
       \qquad \left[\mbox{using Cauchy-Schwarz inequality}\right] \\
& =    \sqrt{\bE\left[ X_{(1)}^2 \right] \, \left(\mu^2 + \frac{\sigma^2}{n-1} \right)} - \frac{\bE\left[X_{(1)}^2\right]}{n-2} \\
& \longrightarrow 0 \,. 
\end{align*}
Combining all these we have
\[
\bE\left[ \left( L_n^{\mbox{first}} + L_n^{\mbox{last}} \right)^2\right] \longrightarrow \mu^2+\sigma^2 \,.
\]
\end{proof}

\section{Auxiliary results}
\label{Sec:Aux}
For the distribution function $F$ we define $F^{-1}:\left(0,1\right) \rightarrow [0,\infty)$ by 
$F^{-1}\left(u\right) := \inf \left\{ x \in \mathbb{R} \,\Big\vert\, F(x) \geq u \,\right\}$, $0 < u < 1$.
It is then a standard fact that $F^{-1}\left(U\right) \sim F$ when $U \sim \mbox{Uniform}\left[0,1\right]$.
We start with a lemma which will give an useful representation of $\TNN$.  
\begin{lemma}
\label{min L= min W}
Let the distances between cities, $(L_{ij})_{i < j \leq n}$ for $i=1,\ldots,n-1$ be i.i.d with $F$ denoting its common distribution function. 
Define the random variable $\displaystyle W_{i}:=F^{-1}\left( 1-\exp({-\frac{Y_{i}}{i}})\right) $ where $\left\{Y_{i}\right\}_{1 \leq i \leq n-1}$ are i.i.d. Exponential random variable each with mean one. Then
\[ \sum_{i=2}^{n-1} \displaystyle \min_{{i < j \leq n}}L_{ij} \stackrel{d}{=} \sum_{i=1}^{n-2}W_{i} \,.\]
Thus
\begin{equation} 
\TNN \stackrel{d}{=} \sum_{i=1}^{n-2}W_{i} + R_n \,, 
\label{Tnd=W} 
\end{equation}
where $R_n \stackrel{d}{=} L_n^{\mbox{first}} + L_n^{\mbox{last}}$ and is independent of $\left\{W_i\right\}_{i=1}^{n-2}$. 
\end{lemma}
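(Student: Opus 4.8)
The plan is to reduce the lemma to a single distributional identity between two sums of \emph{independent} summands, namely $\sum_{i=2}^{n-1}\min_{i<j\le n}L_{ij}$ and $\sum_{i=1}^{n-2}W_i$, and then to obtain the ``Thus'' part by combining this identity with the representation \eqref{Tnd} and the independence statement of Proposition~\ref{Prop1}. The strategy for the identity is the standard one for equality in distribution of sums of independent terms: I would show that the two families of summands have matching marginals after the index reversal $i \leftrightarrow n-i$, and that each family is mutually independent; equality of the two \emph{vectors} in distribution then forces equality of the sums.

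For the marginals, write $M_i := \min_{i<j\le n}L_{ij}$. Since $M_i$ is the minimum of the $n-i$ i.i.d.\ variables $\{L_{ij}: i<j\le n\}$, its survival function is $\Pro(M_i>t)=(1-F(t))^{n-i}$, hence $\Pro(M_i\le t)=1-(1-F(t))^{n-i}$. On the other side, I would first identify the law of $U_i:=1-\exp(-Y_i/i)$. A direct computation using $\Pro(Y_i>y)=e^{-y}$ gives $\Pro(U_i>u)=\Pro\bigl(Y_i>-i\log(1-u)\bigr)=(1-u)^i$ for $0\le u\le 1$. Applying the generalized inverse together with the Galois identity $F^{-1}(u)\le t \iff u\le F(t)$ (the same fact already invoked to get $F^{-1}(U)\sim F$), I obtain $\Pro(W_i\le t)=\Pro(U_i\le F(t))=1-(1-F(t))^i$. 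Comparing the two displays shows $M_i \stackrel{d}{=} W_{n-i}$, and as $i$ runs over $2,\ldots,n-1$ the index $n-i$ runs over $n-2,\ldots,1$, so the two families of marginals coincide.

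The independence is immediate on both sides: the $W_i$ are functions of the independent variables $Y_i$, while each $M_i$ depends only on the ``row'' $\{L_{ij}: i<j\le n\}$, and distinct rows involve disjoint collections of the independent variables $L_{ij}$, so $M_2,\ldots,M_{n-1}$ are mutually independent. Hence $(M_2,\ldots,M_{n-1})\stackrel{d}{=}(W_{n-2},\ldots,W_1)$, and summing yields $\sum_{i=2}^{n-1}M_i\stackrel{d}{=}\sum_{i=1}^{n-2}W_i$. Finally, \eqref{Tnd} expresses $\TNN$ in distribution as $\sum_{i=2}^{n-1}M_i$ plus $L_n^{\mbox{first}}+L_n^{\mbox{last}}$, and Proposition~\ref{Prop1} guarantees these two pieces are independent; since the law of an independent pair is determined by its marginals, replacing $\sum_{i=2}^{n-1}M_i$ by the equidistributed $\sum_{i=1}^{n-2}W_i$ and letting $R_n$ be an independent copy of $L_n^{\mbox{first}}+L_n^{\mbox{last}}$ gives \eqref{Tnd=W}. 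I expect the only delicate point to be the careful use of the generalized inverse, that is, ensuring the Galois equivalence holds for every $u\in(0,1)$ even at atoms and flat stretches of $F$, and that ties in the minima are absorbed by the arbitrary tie-breaking convention; the remainder is bookkeeping of the index reversal.
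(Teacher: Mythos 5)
Your proof is correct, and it rests on the same two pillars as the paper's --- the row minima $M_i:=\min_{i<j\le n}L_{ij}$ are mutually independent with $M_i \stackrel{d}{=} W_{n-i}$, and the ``Thus'' part follows from \eqref{Tnd} together with the independence asserted in Proposition~\ref{Prop1} --- but you establish the marginal identification by a different mechanism. The paper's proof is a representation (coupling) argument: it writes the array as $L_{ij} \stackrel{d}{=} F^{-1}\left(1-e^{-\xi_{ij}}\right)$ with $(\xi_{ij})$ i.i.d.\ standard exponentials, uses the fact that a minimum commutes with the non-decreasing map $x \mapsto F^{-1}\left(1-e^{-x}\right)$ to get $\min_{j} F^{-1}\left(1-e^{-\xi_{ij}}\right) = F^{-1}\left(1-e^{-\min_{j}\xi_{ij}}\right)$ as a sure (pathwise) identity, and then invokes $\min_{i<j\le n}\xi_{ij} \stackrel{d}{=} Y_{n-i}/(n-i)$ together with independence across rows and a reindexing. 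You instead compute both distribution functions explicitly: $\Pro(M_i\le t)=1-(1-F(t))^{n-i}$ on one side, and $\Pro(W_i\le t)=1-(1-F(t))^{i}$ on the other, via the law of $U_i=1-e^{-Y_i/i}$ and the Galois equivalence $F^{-1}(u)\le t \iff u\le F(t)$. What your route buys: it is self-contained at the level of CDFs, it confronts the generalized inverse head-on (the paper leans on the stated fact $F^{-1}(U)\sim F$ and on monotone commutation, both of which quietly rest on the same Galois property), and as a by-product it derives exactly the formula \eqref{dist of Wi} that the paper records separately after the lemma. What the paper's coupling buys: since the min-inside-$F^{-1}$ step holds pathwise, the vector-level equality in distribution is automatic, with no need for your separate ``independent families with matching marginals'' argument. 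One small remark: your worry about ties is unnecessary here --- the lemma concerns only the values of the row minima, not which index attains them, so the tie-breaking convention never enters.
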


\begin{proof}
Let $(\xi_{ij})_{i < j \leq n}$ be i.i.d. Exponential random variable each with mean one. Then
\begin{align}
\sum_{i=2}^{n-1} \displaystyle \min_{{i < j \leq n}}L_{ij} & \stackrel{d}{=} \sum_{i=2}^{n-1} \displaystyle \min_{{i < j \leq n}} F^{-1}(1-e^{-\xi_{ij}}) \nonumber \\
&\stackrel{d}= \displaystyle \sum_{i=2}^{n-1} F^{-1}(1-e^{-\displaystyle \min_{{i < j \leq n}}\xi_{ij}}) \nonumber \\
& \stackrel{d}{=}\displaystyle \sum_{i=1}^{n-2} F^{-1}(1-e^{-\frac{Y_{i}}{i}}) \nonumber 
\end{align}
where $Y_{i}$'s are i.i.d. Exponential  random variable each with mean one.

Finally \eqref{Tnd=W} follows from equation~\eqref{Tnd}.
\end{proof}

In the proofs of our main results, we primarily study properties of $W_{i}$ rather than $\displaystyle \min_{{i < j \leq n}}L_{ij}$. Observe that
\begin{equation}
\Pro(W_i \leq w)= 1-\lbrace 1-F(w)\rbrace^{i} \,\,\, \text{for}\,\,\, w\geq 0.
\label{dist of Wi}
\end{equation}

\begin{lemma}
\label{MG}
Assume that $F$ has a density $f$ and as $t\longrightarrow 0+,~ \frac{f(t)}{t^{\alpha}}\longrightarrow C$, 
where $C\in (0,\infty)$ is constant and  $-1< \alpha < 1$.  Then as $n\longrightarrow \infty,$ 
$
\lbrace\displaystyle \sum_{i=1}^{n-2}(W_{i}-\bE[W_{i}])\rbrace_{n\geq1},
 \label{MG-sumW} $
 converges $a.s.$ and in $\Lt.$
\end{lemma}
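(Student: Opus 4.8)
The plan is to exploit the fact that the summands $W_i$ are \emph{independent}, so that $M_n := \sum_{i=1}^{n-2}(W_i - \bE[W_i])$ is a mean-zero martingale with respect to the filtration generated by $Y_1, Y_2, \ldots, Y_{n-2}$ (equivalently by $W_1, \ldots, W_{n-2}$). For such an $\Lt$-martingale the increments are orthogonal, so $\bE[M_n^2] = \sum_{i=1}^{n-2}\Var(W_i)$, and both the desired almost sure and $\Lt$ convergence follow at once from the $\Lt$-bounded martingale convergence theorem (equivalently the Khinchine--Kolmogorov convergence theorem for series of independent centered variables) provided
\[
\sum_{i=1}^{\infty}\Var(W_i) < \infty .
\]
Thus the whole problem reduces to this single summability estimate.

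To bound $\Var(W_i) \le \bE[W_i^2]$ I would use the distributional identity \eqref{dist of Wi}, which exhibits $W_i$ as the minimum of $i$ i.i.d.\ copies of $F$, so that $\Pro(W_i > w) = \{1-F(w)\}^i \le e^{-iF(w)}$ and hence
\[
\bE[W_i^2] = 2\int_0^\infty w\,\{1-F(w)\}^i\,dw .
\]
I would split this integral at a small fixed $\delta>0$. On $[0,\delta]$ the hypothesis $f(t)/t^{\alpha}\to C$ gives $F(w)\ge c'\,w^{1+\alpha}$ for a constant $c'>0$ and all small $w$, so after the substitution $w = i^{-1/(1+\alpha)}s$ the near-zero part is bounded by $i^{-2/(1+\alpha)}\int_0^\infty s\,e^{-c's^{1+\alpha}}\,ds$; here $\alpha>-1$ guarantees the remaining integral is finite, giving a contribution of order $i^{-2/(1+\alpha)}$. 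On $[\delta,\infty)$ I would factor $\{1-F(w)\}^i \le \{1-F(\delta)\}^{\,i-1}\{1-F(w)\}$ and invoke $\int_0^\infty w\{1-F(w)\}\,dw = \tfrac12\bE[X^2] < \infty$ (finite second moment of $F$), which makes this tail part decay geometrically in $i$. Summing the two pieces yields $\bE[W_i^2] = O\!\left(i^{-2/(1+\alpha)}\right)$.

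Finally, $\sum_i i^{-2/(1+\alpha)}$ converges precisely when $2/(1+\alpha) > 1$, that is when $\alpha < 1$, which is exactly the standing hypothesis, while the geometric tail term is trivially summable. Hence $\sum_i \Var(W_i) < \infty$ and the martingale convergence theorem delivers both modes of convergence. The main obstacle is the middle step: obtaining the sharp order $i^{-2/(1+\alpha)}$ for $\bE[W_i^2]$ uniformly in $i$, which requires converting the local density asymptotic into the two-sided control $F(w) \asymp w^{1+\alpha}$ near $0$ while simultaneously taming the right tail of $F$ through finiteness of its second moment—and it is here that both constraints on $\alpha$ enter, $\alpha>-1$ for integrability near $0$ and $\alpha<1$ for summability of the variance series.
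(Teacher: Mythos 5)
Your proof is correct, and while it shares the paper's overall skeleton---reduce everything to $\sum_{i}\Var(W_i)<\infty$ and then invoke $\Lt$-bounded martingale (Khinchine--Kolmogorov) convergence---the way you obtain the key summability estimate is genuinely different and noticeably cleaner. The paper works with the representation $W_i=F^{-1}\bigl(1-\exp(-Y_i/i)\bigr)$, which only yields a useful bound on the event $\{Y_i/i<\delta_1\}$; to handle the complementary event it introduces the random index $I_0$, bounds $\Pro(I_0=m)$ by a Borel--Cantelli-type estimate, splits a double sum, and invokes Wendel's inequality, Cauchy--Schwarz, and the trick $\bigl(\min(Z_1,\dots,Z_k)\bigr)^k\le\prod_{j=1}^k Z_j$ to control fourth moments. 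You bypass all of that by working directly with the survival function from \eqref{dist of Wi}, writing $\bE[W_i^2]=2\int_0^\infty w\{1-F(w)\}^i\,dw$ and splitting at a fixed $\delta$: near zero the lower bound $F(w)\ge c'w^{1+\alpha}$ plus the scaling substitution $w=i^{-1/(1+\alpha)}s$ gives the unconditional rate $O\bigl(i^{-2/(1+\alpha)}\bigr)$ (the paper's inequality \eqref{upper bdd E(W^2)} gives this same rate but only on the truncation event, which is precisely why it needs the extra $I_0$ machinery), while your factorization $\{1-F(w)\}^i\le\{1-F(\delta)\}^{i-1}\{1-F(w)\}$ makes the tail geometric using the standing finite-second-moment assumption---an assumption the paper's proof also uses, via $\bE[W_1^2]$ in \eqref{BdW1}. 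What your route buys is brevity and an explicit, uniform moment bound; what the paper's route buys is that its truncation apparatus reuses the $F^{-1}$ representation that drives the other proofs in Section~\ref{Proofs}, and its fourth-moment trick needs only the first moment of $F$ at that particular step. Note also that your argument adapts readily to the relaxation discussed in Section~\ref{discuss}: replacing your factorization by $\{1-F(w)\}^i\le\{1-F(\delta)\}^{i-3}\{1-F(w)\}^3$ and appealing to Lemma~\ref{higher moment} with $k=3$ removes the dependence on the second moment of $F$ for $i\ge 3$.
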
 

\begin{proof}
By assumption as $t\longrightarrow 0+, \, \frac{f(t)}{t^{\alpha}}\longrightarrow C$, therefore given $\epsilon > 0$, there exists $\delta>0$, 
such that for all $ 0<t<\delta$, we have 
\[
(C-\epsilon)t^{\alpha} < f(t) < (C+\epsilon)t^{\alpha} \,.
\]
Hence for $0<x<\delta$,
\[  \frac{(C-\epsilon)}{1+\alpha} x^{1+\alpha} < F(x) < \frac{(C+\epsilon)}{1+\alpha} x^{1+\alpha}  \] 
which implies
\begin{equation}
 (\frac{1+\alpha}{C+\epsilon})^{{\frac{1}{1+\alpha}}} x^{{\frac{1}{1+\alpha}}} < 
F^{-1}(x) < (\frac{1+\alpha}{C-\epsilon})^{{\frac{1}{1+\alpha}}} x^{{\frac{1}{1+\alpha}}}. 
\label{ineq for F^-1} 
 \end{equation}

Put $\delta_1:=-\ln (1-\delta)$. If $\frac{Y_{i}}{i} < \delta_1 $ (which ensures that $1-\exp({-\frac{Y_{i}}{i}}) < \delta$), then we have
\begin{equation}  W_{i}~\bm{1}\left[ \frac{Y_{i}}{i}<\delta_1\right]  <(\frac{1+\alpha}{C-\epsilon})^{{\frac{1}{1+\alpha}}} \left( 1-\exp({-\frac{Y_{i}}{i}})\right) ^{{\frac{1}{1+\alpha}}}~\bm{1}\left[ \frac{Y_{i}}{i}<\delta_1\right] . \label{upper bdd W}\end{equation}
Observe that for $\beta>0$,
\begin{align}
\bE\left[ \left( 1-\exp({-\frac{Y_{i}}{i}})\right) ^\beta\right] &=\int_{0}^{\infty} (1-\exp(-y/i))^\beta \exp(-y) dy \nonumber \\
&=i \int_{0}^{1} u^\beta (1-u)^{i-1} du \nonumber\\
&=\Gamma(1+\beta)\frac{\Gamma(i+1)}{\Gamma(i+1+\beta)} \nonumber \\
&\leq \Gamma(2+\beta)\frac{1}{(i+1+\beta)^\beta}. \nonumber
\end{align}
The last inequality follows from the Wendel's double inequality \cite{We48}, which says for real $x>0$ and $0<s<1$ we have
\begin{equation} 
\frac{x}{(x+s)^{1-s}} \Gamma(x) \leq \Gamma(x+s) \leq x^s \Gamma(x) \label{Wendel's double inequality}
\end{equation}
Therefore
\begin{equation}
\bE\left[ W^2_{i}~\bm{1}[\frac{Y_{i}}{i}<\delta_1]\right] <(\frac{1+\alpha}{C-\epsilon})^{{\frac{2}{1+\alpha}}} \Gamma\left( 2+\frac{2}{1+\alpha}\right) \frac{1}{\left( i+1+\frac{2}{1+\alpha}\right) ^{\frac{2}{1+\alpha}}}. \label{upper bdd E(W^2)} \end{equation}
Now as $\displaystyle  i\longrightarrow \infty,~\frac{Y_{i}}{i} \stackrel{a.s.}\longrightarrow 0$. 
This follows from the Borel-Cantelli lemma, because for any $\epsilon_0 > 0$, the sequence of probabilities $\bP\left(Y_i > \epsilon_0 \, i\right) = e^{- \epsilon_0 \, i}$ are summable.  
Define \begin{equation}I_{0}(\omega):=min\left\{i ~\vert~ \frac{Y_{j}(\omega)}{j}<\delta_1, ~\forall j\geq i\right\}. \label{I_{0}}\end{equation}
Fix $m > 1$, then
\[[I_{0}=m]=\left[ \frac{Y_{i}}{i}<\delta_1, \forall i\geq m \quad \text{and} \quad \frac{Y_{m-1}}{m-1} >\delta_1\right] .\]
Hence
\[\mathbb{P}(I_{0}=m)\leq e^{-(m-1)\delta_1}\]
Now,
\[\sum_{i=1}^{\infty} \bE[W_{i}^{2}] =\sum_{m=1}^{\infty} \bE[\sum_{i=1}^{m-1}W_{i}^{2}~\bm{1}(I_{0}=m)]+\sum_{m=1}^{\infty}  \bE[\sum_{i=m}^{\infty}W_{i}^{2}~\bm{1}(I_{0}=m)].\]
But,
\[
\bE[\sum_{i=1}^{m-1}W_{i}^{2}~\bm{1}(I_{0}=m)]= \bE[\sum_{i=1}^{m-2}W_{i}^{2}~\bm{1}(I_{0}=m)]+ \bE[W_{m-1}^{2}\bm{1}(I_{0}=m)] \,.
\]
Since $[I_{0}=m]$ depends on random variables $Y_{m-1}, Y_{m}, Y_{m+1},...$ therefore for $1 \leq i \leq m-2$, $W_i$ is independent of $[I_{0}=m]$, hence  
 \[ \bE[\sum_{i=1}^{m-2}W_{i}^{2}~\bm{1}(I_{0}=m)] \leq e^{-(m-1)\delta_1}\sum_{i=1}^{m-2} \bE[ W_{i}^{2}].  \] 
Since $\bE[W_i^2]$ is a decreasing sequence, we have
\[\sum_{i=1}^{m-2} \bE[ W_{i}^{2}]\leq (m-2) \bE[ W_{1}^{2}]. \]
Therefore
\begin{equation} 
\bE[\sum_{i=1}^{m-2}W_{i}^{2}~\bm{1}(I_{0}=m)] \leq (m-2)e^{-(m-1)\delta_1}\bE[ W_{1}^{2}] \,. 
\label{BdW1}
\end{equation}

By Cauchy-Schwarz Inequality
\[
\bE[W_{m-1}^{2}\bm{1}(I_{0}=m)]] \leq \sqrt{\bE[W_{m-1}^{4}] \mathbb{P}(I_{0}=m)}.
\] 
Now for $m>4$,
\begin{equation}
\bE[W_{m-1}^{4}] \leq \bE[W_{4}^{4}] \leq \mu^{4}.
\label{W4<mu}
\end{equation}
Therefore
\begin{equation}
\bE[W_{m-1}^{2}\bm{1}(I_{0}=m)]] \leq \mu^2 e^{-(m-1)\frac{\delta_1}{2}}.\label{BdW2}\\
\end{equation} 
\linebreak
In the last equality of \eqref{W4<mu}, we use the fact that for $k$ non-negative random variables $Z_1, Z_2,..., Z_k$,
\[ \left( \min(Z_1,Z_2,...,Z_k) \right)^k \leq  \prod_{j=1}^{k}Z_{j}.\]
From \eqref{BdW1} and \eqref{BdW2},  we have
\begin{equation}\sum_{m=1}^{\infty} \bE[\sum_{i=1}^{m-1}W_{i}^{2}~\bm{1}(I_{0}=m)] < \infty \label{1st term is finite}\end{equation}
Now by assumption since $\vert \alpha \vert < 1$, we have $\frac{2}{1+\alpha} > 1$, therefore for $i\geq m$ from inequality~\eqref{upper bdd E(W^2)} we have
\begin{equation}\bE[\sum_{i=m}^{\infty}W_{i}^{2}~\bm{1}(I_{0}=m)] < K e^{-(m-1)\delta_1} \label{2ed term is finite}\end{equation}
where $K$ is a positive constant. Hence from \eqref{1st term is finite} and \eqref{2ed term is finite} we conclude
\begin{equation}
\sum_{i=1}^{\infty} \bE[W_{i}^{2}]  < \infty  \label{final} 
\end{equation}
 Therefore $\displaystyle \Var[\sum_{i=1}^{n}W_{i}]$ is bounded for all $n$. This shows that $\displaystyle \sum_{i=1}^{n-2}(W_{i}-\bE[W_{i}])$ as 
a martingale converges $a.s.$ and in $\mathcal{L}_{2}.$
\end{proof}

The following lemma gives an expression for the mean of $\TNN$ in terms of the distribution function $F$. Under some further assumption on $F$ it also shows 
how the behavior of $\bE\left[\TNN\right]$ depends on the behavior of the density $f$ of $F$ near zero.
\begin{lemma}
\label{expand E[TNN]}
Consider a mean field TSP with i.i.d. edge weights with distribution $F$ which is supported on $[0,\infty)$. Then
\[
\bE[\TNN]= \int_{0}^{\infty} \frac{\left[ \bar{F}(t)\right] ^2 \left[ 1-\left( \bar{F}(t)\right) ^{n-2}\right] }{F(t)} dt + 
\bE[L_n^{\mbox{first}} + L_n^{\mbox{last}}] \,.
\]
Moreover if $F$ admits a continuous density $f$ which is strictly positive on the support $[0,\infty)$ then 
\[
\bE[\TNN]= \int_{0}^{1} \frac{(1-w)^2 (1-[1-w]^{n-2})}{w} \frac{1}{f(F^{-1}(w))} dw + \bE[L_n^{\mbox{first}} + L_n^{\mbox{last}}]\,. 
\]
\end{lemma}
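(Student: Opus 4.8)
The plan is to reduce the whole computation to the independent decomposition furnished by Lemma~\ref{min L= min W}, namely $\TNN \stackrel{d}{=} \sum_{i=1}^{n-2} W_i + R_n$ with $R_n \stackrel{d}{=} L_n^{\mbox{first}} + L_n^{\mbox{last}}$. Taking expectations and using \emph{only} linearity (independence plays no role for the mean), it suffices to evaluate $\sum_{i=1}^{n-2}\bE[W_i]$, since the remaining term contributes exactly $\bE[L_n^{\mbox{first}} + L_n^{\mbox{last}}]$.

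For each summand I would use the tail-integral formula for the mean of a nonnegative random variable, $\bE[W_i] = \int_0^\infty \Pro(W_i > w)\,dw$, and read the survival function off \eqref{dist of Wi}, which gives $\Pro(W_i > w) = [\bar F(w)]^i$ and hence $\bE[W_i] = \int_0^\infty [\bar F(w)]^i\,dw$. Each of these is finite, because $W_i$ is stochastically dominated by $W_1$ (the tail $[\bar F(w)]^i$ is decreasing in $i$), so $\bE[W_i] \leq \bE[W_1] = \mu < \infty$ under the finite-mean hypothesis. Since every integrand is nonnegative, Tonelli's theorem lets me interchange the finite sum with the integral, after which I would sum the geometric series $\sum_{i=1}^{n-2}[\bar F(w)]^i$ in closed form; using $1 - \bar F(w) = F(w)$ this collapses to the integrand of the first displayed identity (the apparent $0/0$ where $F(w)=0$ occurs only on a Lebesgue-null subset of the support, and the resulting integral is manifestly finite, being equal to the finite sum $\sum_i \bE[W_i]$).

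For the second identity I would perform the change of variables $w = F(t)$, under which $\bar F(t) = 1 - w$, $F(t) = w$, and $dt = dw / f(F^{-1}(w))$. This is exactly where the extra hypotheses enter: continuity and strict positivity of $f$ on $[0,\infty)$ make $F$ a strictly increasing $C^1$ bijection of $[0,\infty)$ onto $[0,1)$, so that $F^{-1}$ is well defined and differentiable and the substitution is legitimate; integrability of the transformed integrand on $(0,1)$ is then inherited from the finiteness already established for the first identity. The step I expect to require the most care — and hence the main obstacle — is precisely this substitution together with the behaviour of the Jacobian $1/f(F^{-1}(w))$ near the endpoints: as $w \to 0$ the factor $(1-[1-w]^{n-2})/w$ tends to the finite limit $n-2$, so integrability there is governed entirely by $1/f(F^{-1}(w))$ (which is why a strictly positive density is the natural assumption), whereas near $w = 1$ the vanishing of $(1-w)$ supplies the decay. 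Everything else is routine rearrangement.
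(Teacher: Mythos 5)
Your route is essentially the paper's own: the paper takes expectations in equation~\eqref{Tnd}, writes $\bE[\min_{i<j\leq n} L_{ij}] = \int_0^\infty [\bar{F}(t)]^{n-i}\,dt$, sums the resulting geometric series, and obtains the second identity by the substitution $w=F(t)$; your use of the $W_i$ representation from Lemma~\ref{min L= min W} together with the tail formula $\bE[W_i]=\int_0^\infty[\bar{F}(w)]^i\,dw$ is the same computation in different notation, and your discussion of the substitution is if anything more careful than the paper's one-line justification.

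However, the one step you assert rather than carry out is precisely the step that does not check out: the geometric series does \emph{not} collapse to the stated integrand. You are summing exponents $i=1,2,\ldots,n-2$, so
\[
\sum_{i=1}^{n-2}[\bar{F}(t)]^i \;=\; \frac{\bar{F}(t)\left[1-\left(\bar{F}(t)\right)^{n-2}\right]}{F(t)}\,,
\]
which carries a \emph{single} power of $\bar{F}(t)$ in the numerator, whereas the lemma displays $[\bar{F}(t)]^2$; that would correspond to exponents $2,\ldots,n-1$. (Sanity check with $n=4$: the sum is $\bar{F}+\bar{F}^2$, not $\bar{F}^2+\bar{F}^3$.) The paper's own proof commits the same slip when it sums $[\bar{F}(t)]^{n-i}$ over $2\leq i\leq n-1$, so the discrepancy lies in the statement itself rather than in your method: your argument, done explicitly, proves the corrected identity above (and, after substituting $w=F(t)$, the second identity with $(1-w)$ in place of $(1-w)^2$). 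Had you written out the summation instead of asserting it matches the display, you would have caught this. The error is harmless downstream --- in the proof of Theorem~\ref{Thm f(t)/t^alpha, -1<alpha<0} the integrand is only used through the upper bound $G(t)$, and the corrected integrand satisfies the same bound --- but as written your claim that the series ``collapses to the integrand of the first displayed identity'' is false.
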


\begin{proof}
Let $\bar{F}(t)=1-F(t)$. From equation~\eqref{Tnd} we have 
\[\bE[\TNN]= \displaystyle\sum^{n-1}_{i=2} \bE[\displaystyle \min_{{i < j \leq n}}L_{ij}] + \bE[L_n^{\mbox{first}} + L_n^{\mbox{last}}]
\] 
But, \[ \bE[\displaystyle \min_{{i < j \leq n}}L_{ij}]=\int_{0}^{\infty} [\bar{F}(t)]^{n-i} dt ,\] and hence
\[
\bE[\TNN]= \int_{0}^{\infty} \frac{\left[ \bar{F}(t)\right] ^2 \left[ 1-\left( \bar{F}(t)\right) ^{n-2}\right] }{F(t)} dt + 
\bE[L_n^{\mbox{first}} + L_n^{\mbox{last}}] \,,
\]  
which proves the first part of the lemma.

Now if we assume that $F$ admits a continuous density $f$ which is strictly positive on the support $[0,\infty)$ then the second expression 
follows by changing the variable $w = F\left(t\right)$ in the first. 
\end{proof}

\section{Proofs of the main results}
\label{Proofs}

\subsection{Proof of Theorem~\ref{TNN-E(TNN) converges}}
\begin{proof}
From equation~\eqref{Tnd} we have
\begin{equation*}
\TNN-\bE[\TNN]\stackrel{d}{=}\sum_{i=2}^{n-1}  \min_{{i < j \leq n}}L_{ij}-\bE[\sum_{i=2}^{n-1}  
\min_{{i < j \leq n}}L_{ij}]+ L_n^{\mbox{first}} + L_n^{\mbox{last}}-\bE[L_n^{\mbox{first}} + L_n^{\mbox{last}}]. \label{Tn-E}
\end{equation*}
But by Lemma~\ref{MG} and Lemma~\ref{min L= min W},
$\left\lbrace \displaystyle\sum_{i=2}^{n-1}  \min_{{i < j \leq n}}L_{ij}-\bE[\displaystyle\sum_{i=2}^{n-1}  \min_{{i < j \leq n}}L_{ij}]\right\rbrace_{n>1}$ 
converges in $\Lt$ and hence by Proposition \ref{Prop1}, $\left\lbrace \TNN-\bE[\TNN]\right\rbrace _{n>1}$ converges weakly.
\end{proof}

\subsection{Proof of Theorem~\ref{Thm f(t)-->f(o)}}
\begin{proof}
We will show 
\[\frac{\TNN}{\log n} \stackrel{\Lt}{\longrightarrow} \frac{1}{f(0)} \quad \text{as} \quad n \longrightarrow \infty \,,\]
which will imply \eqref{Tn/logn-->1/f(o)}. Now,
\begin{align}
\bE\left[ \frac{\TNN}{\log n}-\frac{1}{f(0)}\right] ^2 &= \bE\left[ \frac{\TNN-\bE[\TNN]}{\log n} + \frac{\bE[\TNN]}{\log n}-\frac{1}{f(0)}\right] ^2 \nonumber\\
&=\frac{ \bE\left[ \left(\displaystyle\sum_{i=2}^{n-1}  \min_{{i < j \leq n}}L_{ij}-\bE[\displaystyle\sum_{i=2}^{n-1}\min_{{i < j \leq n}}L_{ij}]\right) ^2\right] }{(\log n)^2} \nonumber\\
&+\frac{\bE\left[\left(  L_n^{\mbox{first}} + L_n^{\mbox{last}}-\bE[L_n^{\mbox{first}} + L_n^{\mbox{last}}]\right) ^2\right] }{(\log n)^2}+ \left[ \frac{\bE[\TNN]}{\log n}-\frac{1}{f(0)}\right] ^2 \nonumber\\
&=\frac{ \Var\left[ \displaystyle\sum_{i=2}^{n-1}  \min_{{i < j \leq n}}L_{ij}\right]}{(\log n)^2} + \frac{ \Var\left[ L_n^{\mbox{first}} + L_n^{\mbox{last}}\right]}{(\log n)^2}\nonumber\\
&+ \left[ \frac{\bE[\TNN]}{\log n}-\frac{1}{f(0)}\right] ^2 .
 \label{L2 convergence of TNN/logn-1/f(0)} 
\end{align}
Note that $\displaystyle\sum_{i=2}^{n-1}  \min_{{i < j \leq n}}L_{ij}$ is independent of $L_n^{\mbox{last}}+L_n^{\mbox{first}}$. 
Now by  Lemma~\ref{MG}, Lemma~\ref{min L= min W} and Proposition~\ref{Prop1}, the first two terms in equation~\eqref{L2 convergence of TNN/logn-1/f(0)} 
converges to zero as $n \longrightarrow \infty$. Convergence to zero of the last term in equation~ \eqref{L2 convergence of TNN/logn-1/f(0)} follows from the 
following observation.  
By assumption $f(t)\longrightarrow f(0)$ as $t \longrightarrow 0+$, so using the inequality~\eqref{ineq for F^-1} when $f(0)=C$ and $\alpha=0$, we get that as $i\longrightarrow \infty$, \[\frac{f(0)W_{i}}{\frac{Y_{i}}{i}}\longrightarrow 1 \quad a.s.\]
where $Y_i$'s are i.i.d. Exponential random variable each with mean one and $\displaystyle W_{i}=F^{-1}\left( 1-\exp({-\frac{Y_{i}}{i}})\right) $. Therefore as $n\longrightarrow \infty$
\[\frac{f(0) \displaystyle \sum_{i=1}^{n-2}W_{i}}{\displaystyle \sum_{i=1}^{n-2}\frac{Y_{i}}{i}} \longrightarrow 1 \quad a.s.\]
Now, since $\Var \left[ \displaystyle \sum_{i=1}^{n-2}\frac{Y_{i}}{i}\right]$ is bounded for all $n$, therefore by the martingale convergence theorem   $\displaystyle \sum_{i=1}^{n-2}\frac{Y_{i}}{i}-\bE \left[ \displaystyle \sum_{i=1}^{n-2}\frac{Y_{i}}{i}\right] $ converges almost surely. But $\bE \left[ \displaystyle \sum_{i=1}^{n}\frac{Y_{i}}{i}\right]=\displaystyle \sum_{i=1}^{n} \frac{1}{i} \sim \log n$, thus
\begin{equation}\frac{f(0) \displaystyle \sum_{i=1}^{n-2}W_{i}}{\log n} \longrightarrow 1 \quad a.s. \label{part1}\end{equation}
Now by Lemma~\ref{MG} and Lemma~\ref{min L= min W},  $\displaystyle \sum_{i=1}^{n-2}W_{i} - \bE[\displaystyle \sum_{i=1}^{n-2}W_{i}]$ converges $a.s.$ to a random variable. This observation along with \eqref{part1} give
\begin{equation}\lim_{n \longrightarrow \infty} \frac{\bE[\displaystyle \sum_{i=1}^{n-2}W_{i}]}{\log n}=\frac{1}{f(0)}  \label{Lim}\end{equation}
and therefore by equation~\eqref{Tnd=W} and Proposition~\ref{Prop1},
\[
\lim_{n \longrightarrow \infty} \frac{\bE[\TNN]}{\log n}=\frac{1}{f(0)} \,.  
\]
This also proves $\bE[\TNN] \sim \frac{1}{f(0)}\log n$.
\end{proof}

\subsection{Proof of Corollary~\ref{Thm for iid exp}}
\begin{proof}
Consider a mean field TSP on $n$ cities $\left\{1, 2, ..., n\right\}$, where for each $1\leq i \leq n-1$, the intercity distances $\left\{L_{ij}\right\}_{i< j \leq n}$, are i.i.d. Exponential random variable each with mean one. Starting at city 1, our job is to find the nearest city to it, that means to find $\displaystyle \min_{{1 < j \leq n}}L_{1j}$.
Now we have a tour, with 2 cities in it. Finding the next nearest city to the last visited city in this tour, in distribution is the same as finding the \textit{minimum} of $n-3$ independent Exponential random variables. \\
Since $\displaystyle \min_{{i < j \leq n}}L_{ij}$ has an Exponential distribution with mean $\frac{1}{n-i}$, then we have
\begin{equation} \bE [\sum_{i=1}^{n-1}  \min_{{i < j \leq n}}L_{ij}]=\frac{1}{n-1} + \frac{1}{n-2} + \ldots + \frac{1}{2} + 1 \label{E(min)=sum 1/i}\end{equation}
Since $ \displaystyle \Var[\displaystyle\sum_{i=1}^{n-1}  \min_{{i < j \leq n}}L_{ij}] = \displaystyle\sum_{i=1}^{n-1}\frac{1}{i^{2}}$, hence for all $n\geq 1, \Var\left(\!\displaystyle\sum_{i=1}^{n-1}\!\min_{{i < j \leq n}}L_{ij}-\bE[\!\displaystyle\sum_{i=1}^{n-1}\!\min_{{i < j \leq n}}L_{ij}]\right)  $ is bounded. Therefore by the martingale convergence theorem, we conclude that the martingale sequence
\begin{equation}
\left\lbrace \sum_{i=1}^{n-1}  \min_{{i < j \leq n}}L_{ij}-\bE[\sum_{i=1}^{n-1}  \min_{{i < j \leq n}}L_{ij}]\right\rbrace _{n \geq 1} \quad \text{converges} \quad a.s. \quad \text{and in} \quad \Lt.
\label{e1}
\end{equation} 
Note that as we saw in equation~\eqref{E(min)=sum 1/i},  $\displaystyle\bE[\sum_{i=1}^{n-1}  \min_{{i < j \leq n}}L_{ij}]=\displaystyle\sum_{i=1}^{n-1} \frac{1}{i}$. Using the fact that,
\[\displaystyle\sum_{i=1}^{n} \frac{1}{i} = \log n +\gamma +O(\frac{1}{n})\, \]
where $\gamma\!:=\!\displaystyle\lim_{n\longrightarrow\infty}\!\left(\!\sum_{k=1}^{n}\!\frac{1}{k}-\log n\!\right)$ is the Euler constant,
shows that  
$\displaystyle{\!\left\lbrace\bE[\!\sum_{i=1}^{n-1}\!\min_{{i < j \leq n}}L_{ij}]-\log n\!\right\rbrace_{n\!\geq\!1}}$ is a convergent sequence.
Now from \eqref{Tnd}, we have
\begin{align*}
\TNN-\log n &\stackrel{d}{=} \sum_{i=2}^{n-1}  \min_{{i < j \leq n}}L_{ij}-\bE[\sum_{i=2}^{n-1}  \min_{{i < j \leq n}}L_{ij}]+\bE[\sum_{i=2}^{n-1}  \min_{{i < j \leq n}}L_{ij}]-\log n + L_n^{\mbox{first}} + L_n^{\mbox{last}}\\
&\stackrel{d}{=} \sum_{i=2}^{n-1}  \min_{{i < j \leq n}}L_{ij}-\bE[\sum_{i=2}^{n-1}  \min_{{i < j \leq n}}L_{ij}]+\bE[\sum_{i=1}^{n-1}  \min_{{i < j \leq n}}L_{ij}]-\log n \\
&+ L_n^{\mbox{first}}+ L_n^{\mbox{last}}-\bE\left[L_n^{\mbox{first}}\right].
\end{align*}
Therefore by using \eqref{e1} and Proposition~\ref{Prop1}, we get $\left(  \TNN-\log n\right)  _{n\geq1}$ converges weakly.  
\end{proof}

\subsection{Proof of Theorem~\ref{Thm f(t)/t^alpha, 0<alpha<1}}
\begin{proof}
Recall the double inequality~\eqref{ineq for F^-1} in the proof of Lemma~\ref{MG}. By the assumption of the theorem and ~\eqref{ineq for F^-1}, as $i\longrightarrow \infty$, \[\frac{(\frac{C}{1+\alpha})^\frac{1}{1+\alpha}W_{i}}{(\frac{Y_{i}}{i})^\frac{1}{1+\alpha}}\longrightarrow 1 \quad a.s.\]
where $Y_i$'s are i.i.d. Exponential random variable each with mean one and $\displaystyle W_{i}=F^{-1}\left( 1-\exp({-\frac{Y_{i}}{i}})\right) $. Therefore as $n\longrightarrow \infty$
\[\frac{(\frac{C}{1+\alpha})^\frac{1}{1+\alpha} \displaystyle \sum_{i=1}^{n-2}W_{i}}{\displaystyle \sum_{i=1}^{n-2}(\frac{Y_{i}}{i})^{\frac{1}{1+\alpha}}} \longrightarrow 1 \quad a.s.\]
Since $0 < \alpha < 1$ so $\frac{2}{1+\alpha} > 1$, thus  $\Var\left(\displaystyle \sum_{i=1}^{n-2}(\frac{Y_{i}}{i})^{\frac{1}{1+\alpha}}\right) $ is uniformly bounded and so by the martingale convergence theorem  $\displaystyle \sum_{i=1}^{n-2}(\frac{Y_{i}}{i})^{\frac{1}{1+\alpha}}-\bE \left[ \displaystyle \sum_{i=1}^{n-2}(\frac{Y_{i}}{i})^{\frac{1}{1+\alpha}}\right] $ converges almost surely. But \[\bE \left[ \displaystyle \sum_{i=1}^{n-2}(\frac{Y_{i}}{i})^{\frac{1}{1+\alpha}}\right]=\Gamma(1+\frac{1}{1+\alpha})\displaystyle \sum_{i=1}^{n-2}(\frac{1}{i})^{\frac{1}{1+\alpha}}\,.\]
Thus 
\begin{equation}\frac{\displaystyle \sum_{i=1}^{n-2}W_{i}}{K_{\alpha}n^{1-\frac{1}{1+\alpha}}} \longrightarrow 1 \quad a.s. \label{Wi/n-->C(a)} \end{equation}
where
\[K_{\alpha}:=(\frac{1+\alpha}{C})^{\frac{1}{1+\alpha}} \frac{1+\alpha}{\alpha}\Gamma(1+\frac{1}{1+\alpha})\,.\] Now 
\[\displaystyle \sum_{i=1}^{n-2}W_{i}- K_{\alpha} n^{1-\frac{1}{1+\alpha}}= \displaystyle \sum_{i=1}^{n-2}W_{i}-\bE[\displaystyle \sum_{i=1}^{n-2}W_{i}]+ \bE[\displaystyle \sum_{i=1}^{n-2}W_{i}]-K_{\alpha}n^{1-\frac{1}{1+\alpha}} \,.\]
Recall that by Lemma~\ref{MG},
$\displaystyle \sum_{i=1}^{n-2}W_{i}-\bE[\displaystyle \sum_{i=1}^{n-2}W_{i}]$ has an almost sure limit, so using  \eqref{Wi/n-->C(a)} we get
\begin{equation}
\lim_{n \longrightarrow \infty} \frac{\bE[\displaystyle \sum_{i=1}^{n-2}W_{i}]}{n^{1-\frac{1}{1+\alpha}}}=K_{\alpha}
\end{equation}
and hence by Lemma~\ref{MG}, Lemma~\ref{min L= min W} and equation~\eqref{Tnd=W},
\[\bE[\TNN] \sim K_{\alpha} n^{1-\frac{1}{1+\alpha}} .\]
Note that
 \begin{align*}
\bE[\frac{\TNN} {n^{1-\frac{1}{1+\alpha}}}-K_{\alpha}]^{2}&=\bE[\frac{\TNN-\bE[\TNN]} {n^{1-\frac{1}{1+\alpha}}}+\frac{\bE[\TNN]} {n^{1-\frac{1}{1+\alpha}}}-K_{\alpha}]^{2}\nonumber\\
&=\frac{ \bE\left[ \left(\displaystyle\sum_{i=2}^{n-1}  \min_{{i < j \leq n}}L_{ij}-\bE[\displaystyle\sum_{i=2}^{n-1}\min_{{i < j \leq n}}L_{ij}]\right) ^2\right] }{(n^{1-\frac{1}{1+\alpha}})^2} \nonumber\\
&+\frac{\bE\left[\left(  L_n^{\mbox{first}} + L_n^{\mbox{last}}-\bE[L_n^{\mbox{first}} + L_n^{\mbox{last}}]\right) ^2\right] }{(n^{1-\frac{1}{1+\alpha}})^2}+ \left[ \frac{\bE[\TNN]}{n^{1-\frac{1}{1+\alpha}}}-K_{\alpha}\right] ^2 \nonumber\\
&=\frac{ \Var\left[ \displaystyle\sum_{i=2}^{n-1}  \min_{{i < j \leq n}}L_{ij}\right]}{(n^{1-\frac{1}{1+\alpha}})^2} + \frac{ \Var\left[ L_n^{\mbox{first}} + L_n^{\mbox{last}}\right]}{(n^{1-\frac{1}{1+\alpha}})^2}\nonumber\\
&+ \left[ \frac{\bE[\TNN]}{n^{1-\frac{1}{1+\alpha}}}-K_{\alpha}\right] ^2
\end{align*}
converges to zero as $n \longrightarrow \infty$. Hence
\[
\frac{\TNN} {n^{1-\frac{1}{1+\alpha}}}\stackrel{\Pro}{\longrightarrow}K_{\alpha}
\]
and in $\Lt$.
\end{proof}

\subsection{Proof of Theorem \ref{Thm f(t)/t^alpha, -1<alpha<0}}
\begin{proof}
As it has mentioned in the proof of Lemma~\ref{MG}, since ${\frac{1}{1+\alpha}}>1$, we get
\[ \sup_{n\geq1}\Var(\displaystyle \sum_{i=1}^{n-2}W_{i})<\infty \,.\]
Therefore $\displaystyle \sum_{i=1}^{n-2}W_{i}-\bE[\displaystyle \sum_{i=1}^{n-2}W_{i}]$ as a martingale converges $a.s.$ and in $\Lt$.
So by equation~\eqref{Tnd=W} and Proposition~\ref{Prop1}, $\TNN-\bE[\TNN]$ converges weakly. 

Now to complete the proof it is enough to show that $\left\{\bE\left[\TNN\right]\right\}_{n \geq 1}$ is a convergent sequence. For that 
we apply Lemma~\ref{expand E[TNN]} to get
\begin{equation}
\bE[\TNN]= \int_{0}^{\infty} \frac{\left[ \bar{F}(t)\right] ^2 \left[ 1-\left( \bar{F}(t)\right) ^{n-2}\right] }{F(t)} dt + 
\bE[L_n^{\mbox{first}} + L_n^{\mbox{last}}] \,.
\label{Equ:Expect-TNN}
\end{equation}
Now fix $\epsilon > 0$ and get $\delta > 0$ such that the equations leading to the
double inequality~\eqref{ineq for F^-1} holds. Also find $M > 0$ such that $F\left(M\right) \geq \frac{1}{2}$. Consider the function
$G: [0,\infty) \rightarrow [0, \infty)$ defined as
\[
G\left(t\right) := \left\{
                   \begin{array}{cl}
                   \frac{1}{F(t)} & \mbox{if\ } 0 < t < \delta \\
                   \frac{1}{F(\delta)} & \mbox{if\ } \delta \leq t \leq M \\
                   2 \bar{F}(t) & \mbox{otherwise}
                   \end{array} 
                   \right. \,.
\]
Then for any $n > 1$ and $t > 0$ we have
\[
\frac{\left[ \bar{F}(t)\right] ^2 \left[ 1-\left( \bar{F}(t)\right) ^{n-2}\right] }{F(t)} \leq  G(t) \,.
\]
Also note that $\int_M^{\infty} \! G(t) \, dt \leq 2 \int_0^{\infty} \! \bar{F}(t) \, dt < \infty$ as $F$ is positively supported and has finite first moment. 
Further by the choice of $\delta$ we get that on $\left(0,\delta\right)$ the density $f$ is strictly positive and $F$ is strictly increasing. So
\begin{align*}
       \int_0^{\delta} \! G(t) \, dt 
& =    \int_0^{\delta} \! \frac{dt}{F(t)} \\
& =    \int_0^{F(\delta)} \! \frac{dw}{w \, f\left(F^{-1}(w)\right)} \qquad \left[\mbox{substitute\ } w=F(t)\right] \\
& \leq \kappa \int_0^1 \! \frac{1}{w^{1+\frac{\alpha}{1+\alpha}}} \, dw  < \infty \,,
\end{align*}
where $\kappa > 0$ is some constant and the last but one inequality follows by using the double inequality~\eqref{ineq for F^-1} and the final inequality holds 
because $-1 < \alpha < 0$. 
Thus we get that 
\[
\int_0^{\infty} \! G(t) \, dt < \infty \,.
\]
So by the dominated convergence theorem we conclude that
\[
\lim_{n \rightarrow \infty} \int_{0}^{\infty} \frac{\left[ \bar{F}(t)\right] ^2 \left[ 1-\left( \bar{F}(t)\right) ^{n-2}\right] }{F(t)} dt 
\]
exists.
This along with Proposition~\ref{Prop1} proves that $\left\{ \bE\left[\TNN\right] \right\}_{n \geq 1}$ is convergent sequence, which completes the proof of the
theorem. 
\end{proof}

\subsection{Proof of Theorem \ref{Prop f(t)/t^alpha,alpha>=1}}
\begin{proof}
By assumption that $F$ is Weibull distribution with shape parameter $(1+\alpha)$ and scale parameter $1$, we get \[F(x)=1-e^{-x^{1+\alpha}}, \quad x \geq 0 \] 
Therefore $F^{-1}(t)=[-\log (1-t)]^{\frac{1}{1+\alpha}}$, where $ 0 < t< 1$. Hence,
\begin{align*}
 \sum_{i=2}^{n-1} \displaystyle \min_{{i < j \leq n}}L_{ij} &\stackrel{d}{=}\sum_{i=1}^{n-2} W_i \\
 & =\sum_{i=1}^{n-2}[-\log (e^{-\frac{Y_i}{i}})]^{\frac{1}{1+\alpha}} \\
 & =\sum_{i=1}^{n-2}(\frac{Y_{i}}{i})^{\frac{1}{1+\alpha}} 
\end{align*}
where $Y_i$'s are i.i.d. Exponential random variable each with mean one. Note that
\[\mu(\alpha):=\mathbb{E}\left[ Y_{i}^{\frac{1}{1+\alpha}}\right] =\Gamma(1+\frac{1}{1+\alpha})\]
and
\[\sigma^2(\alpha):=\Var\left[ Y_{i}^{\frac{1}{1+\alpha}}\right] =\Gamma(\frac{2}{1+\alpha}+1)-\Gamma^{2}(1+\frac{1}{1+\alpha})\, .\]
Let \[V_{i}(\alpha):=\frac{Y_{i}^{\frac{1}{1+\alpha}}-\mathbb{E}[Y_{i}^{\frac{1}{1+\alpha}}]}{\sigma(\alpha)  i^{\frac{1}{1+\alpha}}\sqrt{\displaystyle \sum_{i=1}^{n-2}(\frac{1}{i})^{\frac{2}{1+\alpha}}}}\]
and $Z_{n}(\alpha)=\displaystyle \sum_{i=1}^{n-2}V_{i}(\alpha)$. Observe that 
$\mathbb{E}[V_{i}(\alpha)]=0$ and $\displaystyle \sum_{i=1}^{n-2}\Var[V_{i}(\alpha)]=1\,$. Choose $\delta > 0$ such that $\delta > \alpha-1$. So for some $M>0$,
\[\displaystyle \sum_{i=1}^{n-2}\mathbb{E}\left[ |V_{i}(\alpha)|^{2+\delta}\right] \leq \frac{M}{\sigma(\alpha)^{2+\delta}}\frac{1}{[\displaystyle \sum_{i=1}^{n-2}(\frac{1}{i})^{\frac{2}{1+\alpha}}]^{\frac{2+\delta}{2}}} \displaystyle \sum_{i=1}^{n-2}(\frac{1}{i})^{\frac{2+\delta}{1+\alpha}}\,.\]
Since $\frac{2}{1+\alpha}\leq 1$ and $\frac{2+\delta}{1+\alpha} > 1$, we have
\[\lim_{n\rightarrow\infty}\displaystyle \sum_{i=1}^{n-2}\mathbb{E}\left[ |V_{i}(\alpha)|^{2+\delta}\right] =0 \, .\]
Hence Lyapunov condition is satisfied for $\alpha \geq 1$ and so $Z_n(\alpha)$ converges in distribution to a standard Normal random variable, as $n$ goes to infinity. Now by equation~\eqref{Tnd} we have
\begin{align*}
\frac{\TNN-\bE[\TNN]}{n^{\frac{1}{2}-\frac{1}{1+\alpha}}} &\stackrel{d}{=}\frac{\displaystyle\sum_{i=1}^{n-2}(\frac{Y_{i}}{i})^{\frac{1}{1+\alpha}}-\bE[\displaystyle\sum_{i=1}^{n-2}(\frac{Y_{i}}{i})^{\frac{1}{1+\alpha}}]}{\lbrace \Var[\displaystyle\sum_{i=1}^{n-2}(\frac{Y_{i}}{i})^{\frac{1}{1+\alpha}}]\rbrace^{1/2}} \frac{\lbrace \Var[\displaystyle\sum_{i=1}^{n-2}(\frac{Y_{i}}{i})^{\frac{1}{1+\alpha}}]\rbrace^{1/2}}{n^{\frac{1}{2}-\frac{1}{1+\alpha}}}\\
&+ \frac{L_n^{\mbox{first}} + L_n^{\mbox{last}}-\bE[L_n^{\mbox{first}} + L_n^{\mbox{last}}]}{n^{\frac{1}{2}-\frac{1}{1+\alpha}}}\,,\end{align*}
and thus the proof of proposition for $\alpha >1 $ is completed by Proposition~\ref{Prop1}. Note that when $\alpha=1$, by equation~\eqref{Tnd} we get
 \begin{align*}
\TNN-\bE[\TNN] &\stackrel{d}{=}\frac{\displaystyle\sum_{i=1}^{n-2}(\frac{Y_{i}}{i})^{\frac{1}{2}}-\bE[\displaystyle\sum_{i=1}^{n-2}(\frac{Y_{i}}{i})^{\frac{1}{2}}]}{\lbrace \Var[\displaystyle\sum_{i=1}^{n-2}(\frac{Y_{i}}{i})^{\frac{1}{2}}]\rbrace^{1/2}} \lbrace \Var[\displaystyle\sum_{i=1}^{n-2}(\frac{Y_{i}}{i})^{\frac{1}{2}}]\rbrace^{1/2} \\
&+ L_n^{\mbox{first}} + L_n^{\mbox{last}}-\bE[L_n^{\mbox{first}} + L_n^{\mbox{last}}]\,.\end{align*}
 But, 
 \[ \Var[\displaystyle\sum_{i=1}^{n-2}(\frac{Y_{i}}{i})^{\frac{1}{2}}]= \sigma^2(1)\displaystyle\sum_{i=1}^{n-2} \frac{1}{i}\]
 Therefore by Proposition~\ref{Prop1} and the fact that $\displaystyle\sum_{i=1}^{n-2} \frac{1}{i} \sim \log n$ we get, \[\frac{\TNN-\bE[\TNN]}{\sqrt{\log n}}\stackrel{d}{\longrightarrow}  N(0,\sigma^2(1)) \]
\end{proof}

\section{Discussion}
\label{discuss}
In our theorems, we assumed that the second moment of $F$ exists.
This assumption is not needed. The following lemma says that if $F$ is a positively supported distribution with finite $\beta^{\mbox{th}}$-moment then for any 
$k > \frac{2}{\beta}$ we must have $\bE\left[ \left(\mathop{\min}\limits_{1 \leq i \leq k} Z_i\right)^2\right] < \infty$ where $Z_1, Z_2, \ldots$ are i.i.d. $F$. 
  
\begin{lemma}
\label{higher moment}
Suppose $Z$ is a non-negative random variable such that for some $\beta > 0,\, \bE[Z^{\beta}] < \infty$. Then for any $k > \frac{2}{\beta}$ we have
\[ \int_{0}^{\infty} \! t \left\{ \mathbb{P}(Z>t) \right\}^{k} \, dt < \infty \,.\]
\end{lemma}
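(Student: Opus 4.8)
The plan is to bound the tail $\mathbb{P}(Z>t)$ for large $t$ using the finite $\beta$-th moment, and then verify the integrability of $t\{\mathbb{P}(Z>t)\}^k$ separately on a neighborhood of $0$ and on the tail, where the two regions require genuinely different estimates. First I would split the integral at $t=1$,
\[
\int_{0}^{\infty} \! t \left\{ \mathbb{P}(Z>t) \right\}^{k} \, dt
= \int_{0}^{1} \! t \left\{ \mathbb{P}(Z>t) \right\}^{k} \, dt
+ \int_{1}^{\infty} \! t \left\{ \mathbb{P}(Z>t) \right\}^{k} \, dt \,.
\]
On $[0,1]$ the crude bound $\mathbb{P}(Z>t) \leq 1$ suffices, since then the integrand is at most $t$ and the first piece is bounded by $\int_0^1 t\,dt = \tfrac12 < \infty$. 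Here the hypothesis on $k$ plays no role; all the work is in the tail.

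For the tail I would invoke Markov's inequality applied to the random variable $Z^{\beta}$: for every $t > 0$,
\[
\mathbb{P}(Z>t) = \mathbb{P}\!\left(Z^{\beta} > t^{\beta}\right) \leq \frac{\bE[Z^{\beta}]}{t^{\beta}} \,.
\]
Raising both sides to the $k$-th power and multiplying by $t$ gives, for $t \geq 1$,
\[
t \left\{ \mathbb{P}(Z>t) \right\}^{k} \leq \left(\bE[Z^{\beta}]\right)^{k} \, t^{\,1-\beta k} \,.
\]
Since $\bE[Z^{\beta}] < \infty$, the prefactor is a finite constant, so it remains only to integrate the power $t^{\,1-\beta k}$ over $[1,\infty)$.

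The integral $\int_{1}^{\infty} t^{\,1-\beta k}\,dt$ converges if and only if $1 - \beta k < -1$, that is, $\beta k > 2$, which is exactly the assumption $k > \tfrac{2}{\beta}$. Combining the two pieces then yields the finiteness claimed in the lemma. There is no substantive obstacle beyond this exponent bookkeeping: the entire content of the statement is that the polynomial tail decay $t^{-\beta}$ guaranteed by Markov's inequality, once raised to the $k$-th power, decays fast enough to dominate the extra factor of $t$, and the threshold $k > 2/\beta$ is precisely the borderline at which this happens.
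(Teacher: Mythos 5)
Your proof is correct and is exactly the argument the paper has in mind: the paper omits the proof, remarking only that it ``follows easily from Markov's inequality,'' and your splitting at $t=1$ together with the bound $\mathbb{P}(Z>t)\leq \bE[Z^{\beta}]/t^{\beta}$ is the standard way to carry that out. The exponent bookkeeping ($1-\beta k<-1$ precisely when $k>2/\beta$) is right, so nothing is missing.
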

The proof of this lemma follows easily from Markov's inequality, so we omit it here. 
Now as before let random variable $\displaystyle  W_{i}=F^{-1}\left( 1-\exp({-\frac{Y_{i}}{i}})\right) $ where $Y_{i}$'s are Exponential with mean one. 
We have assumed $F$ has finite first moment so then by taking $k=3$ in Lemma \ref{higher moment} above we can conclude that $W_i$ has finite second moment 
for $i \geq 3$. Thus under the assumptions of Lemma \ref{MG} and following the proof of this lemma we can conclude that $\displaystyle{\sum_{i=k}^{n-2}(W_{i}-\bE[W_{i}])}$
converges almost surely and in $\Lt$. Thus all the results stated in Section \ref{Main results} hold except those on $\Lt$ convergence.

\bibliographystyle{plain}
\bibliography{reference}

\end{document}